\documentclass[preprint,12pt]{article}
\usepackage{graphicx,amssymb}
\usepackage{subfigure}
\usepackage{amsfonts,amsmath,amsthm}
\usepackage{multirow}
\usepackage{appendix}
\usepackage{microtype}

\newtheorem{thm}{Theorem}

\usepackage{comment}
\usepackage{mathrsfs} 
\usepackage{algpseudocode}
\usepackage{algorithm}
\usepackage{amsmath}
\usepackage[utf8]{inputenc}
\usepackage{dirtytalk}
\usepackage{tikz}
\usepackage{url}
\usepackage{color}
\usepackage{lipsum}

\newcommand\blfootnote[1]{%
  \begingroup
  \renewcommand\thefootnote{}\footnote{#1}%
  \addtocounter{footnote}{-1}%
  \endgroup
}
\usepackage[total={6.5in, 9in}, top=1in, left=1in]{geometry}
\usepackage{multirow}

\allowdisplaybreaks[4]

\title{Statistical Inversion Using Sparsity and Total Variation Prior And Monte Carlo Sampling Method For Diffuse Optical Tomography}

% \author{Thilo Strauss, Sanwar Ahmad, Taufiquar Khan*}
\author{Thilo Strauss$^a$, Sanwar Ahmad$^b$, Taufiquar Khan$^c$
\\
$^a$University of Washington, Seattle, School of Medicine \\ $^b$Colorado State University, Department of  Mathematics \\ $^c$Clemson University, School of Mathematical and Statistical Sciences
}
% \fntext[1]{Work done at the University of Washington, Seattle, WA.}
\date{}

\begin{document}
\maketitle
\begin{abstract}
In this paper, we formulate the reconstruction problem in diffuse optical tomography (DOT) in a statistical setting for determining the optical parameters, scattering and absorption, from boundary photon density measurements. A special kind of adaptive Metropolis algorithm for the reconstruction procedure using sparsity and total variation prior is presented. Finally, a simulation study of this technique with different regularization functions and its comparison to the deterministic Iteratively Regularized Gauss Newton method shows the effectiveness and stability of the method.
\blfootnote{$^a$Work done at the University of Washington, Seattle, WA.}
\blfootnote{$^c$Corresponding author: khan@clemson.edu }
\end{abstract}

%%%%%%%%%%%%%%%%%%%%%%%%%%%%%%%
\section{Introduction}
\label{sec:intro}

Diffuse Optical Tomography (DOT) is an imaging modality for probing highly scattering media by using low-energy visible (wavelength from 380nm to 750 nm) or near-infrared light (wavelength from 700 to 1200 nm). Light penetrates the media and interacts with its tissue. The predominant effects are absorption and scattering \cite{Gibson:review,Chance:1993,Delpy:1997,Hebden:1997}.  The widely accepted photon transport model is the Radiative Transfer Equation (RTE) \cite{Chandrasekhar:1960,Ishimaru:1978}, an integro-differential equation for the radiance, involving spatially varying diffusion and absorption parameters, which are a priori unknown. Hence the inverse problem consists of reconstructing an image of the optical properties (absorption and diffusion coefficients) of the tissue from measurements of some function of the photon density on the boundary.  The optical properties will vary significantly between background tissue and potential tumors, making DOT an attractive imaging technique.  Further, the scattering and absorption parameters are functional information, which are not acquired by standard x-ray attenuation type techniques, that can give information such as hemoglobin, water content, and lipid concentration \cite{Boas:2001}.\\
In practice, a low order diffusion approximation to the RTE is often adopted which is the standard model in DOT. The approximation is a parabolic and an elliptic differential equation in the time-dependent case and in the steady-state case or the frequency domain, respectively \cite{Arridge:1999,ArridgeSchotland:2009}. Most existing computational methods for the forward problems as well as inverse problems of photon migration in biological tissues are based on the approximation because of its simplicity compared to the full blown radiative transfer equation \cite{Chance:1995,Hielscher:1998}. It is well known that the DOT inverse problem is exponentially ill-posed \cite{Arridge:1999,Natterer:2001}. Image reconstruction in DOT is highly nonlinear and unstable \cite{Arridge:review,Khan:acm}. In fact, DOT is an excellent example of a severely ill-posed inverse problem and is getting more and more attention \cite{Arridge:review,Gibson:review,Khan:acm}.  The DOT inverse problem also includes electrical impedance tomography (EIT) as a special case \cite{borcea2002electrical}. There is great interest in understanding the inverse problem in DOT due to the huge impact in potential applications such as medical imaging for example neo-natal brain imaging, detection of breast cancer, and osteoarthritis detection. DOT is a preferred modality because of its low cost, non-invasiveness, and safe use of mainly near infrared (compared to x-ray) light radiation \cite{Gibson:review,Jiang:review}. In order to solve an inverse problem using the elliptic partial differential equation (PDE) model such as in DOT, many computational issues must be considered such as model error, nonlinearity, linearization via adjoint, uniqueness/non-uniquness, ill-posedness, regularization etc. (see \cite{Khan:acm,Natterer:review,Santosa:algorithm,Walker:backprojection, Kelley:iterativeoptimization,Koza:genetic,Boyd:convexoptimization,Tropp:greedy,hofmann2007convergence,ito2011regularization,
jin2012iterative,kaltenbacher2010convergence,resmerita2005regularization,ito2011new,ito2011newapproach,kaltenbacher2011adaptive,
schuster2012regularization} for details).\\
In order to overcome the main difficulties of the ill-posed inverse problem in DOT, regularization is required to find a reasonable solution for this problem.
There exists vast literature on how to regularize a nonlinear ill-posed problem such as DOT. For example, Tikhonov and iterated soft-shrinkage regularization methods for nonlinear inverse medium scattering problems \cite{lechleiter2013tikhonov}, a nonlinear ART or Kaczmarz method \cite{Natterer:review}, a conjugate gradient type method \cite{Pei:uniqueness}, variations/hybrid of Newton's method \cite{Natterer:review,natterer2002frechet}, diffusion-backdiffusion algorithms \cite{Arridge:modeling,Natterer:review}, total variation algorithms \cite{chung2005electrical} such as using level sets \cite{Dorn:adjoint,Dorn,Dorn2}, iteratively regularized Gauss-Newton \cite{Smirnova:ip} etc. have been investigated for DOT . However the reconstruction results are still not satisfactory for DOT. \\
In recent years, statistical methods and regularizations \cite{kaipio2000statistical, straussstatistical, Strauss2016Statistical, strauss2015statistical,sanwar2019} have been studied for reconstructing the EIT problem. In this paper we extend some of the results for EIT to DOT. That is, we formulate the DOT problem in terms of a posterior density, introduce statistical regularization methods and finally use the Markov Chain Monte Carlo to find a point estimate for the absorption and diffusion coefficients from boundary measurements. We note that classical algorithms such as the Metropolis Hastings \cite{metropolis1953equation, hastings1970monte, chib1995understanding} or the Gibbs Sample \cite{george1993variable} are not performing well for this particular problem in our experience. Hence, we are using a special kind adaptive Metropolis algorithm \cite{straussstatistical} to obtain a proper reconstruction of the absorption and diffusion coefficients in a suitable about of time. There is a vast amount of statistical literature on adaptive Metropolis algorithms such as  \cite{haario2001adaptive, gilks1998adaptive, gelfand1994markov, gelman1996efficient}.\\
This paper is organized in the following way: In section \ref{Model} the analytical forward problem of DOT is described. In section \ref{diff}, the Fr\'{e}chet differentiability of the DOT forward operator is discussed, which is used in section \ref{inverse} for the brief description of the iteratively regularized Gauss-Newton (IRGN) method used as comparison to the main result of this paper. Proof of the convergency of IRGN is presented in section \ref{irgn}. In section \ref{inversion}, the in statistical inverse problem is is formulated, mainly describing the posterior density, different regularization functions, the Markov Chain Monte Carlo method, and a pilot adaptive Metropolis algorithm. In section \ref{simulationStudy}, simulation from noisy measurements are presented to show the effectiveness and stability of the method and compare it to the classical Newton method. Finally in section \ref{conclusion}, we resume the paper presenting final concussions.
%%%%%%%%%%%%%%%%%%%%%%%%%%%%%%%%%%%%%%%%%
\section{The Forward Problem}\label{Model}
To model light traveling through tissue, the scattering and absorption qualities of the medium are required.  Specifically the reduced scattering coefficient $\mu_s'$ and absorption coefficient $\mu_a$ from the RTE, which models this type of physical system, are needed.  Approximating with the frequency domain diffusion approximation to the RTE, the result is a fairly simple model for optical tomography:
 \[ -\nabla \cdot \left( D \nabla u \right) + (\mu + ik) u = h \quad \mbox{ in } \Omega  \]
where $D=\frac{1}{3(\mu_a+\mu_s')}$ represents the diffusion coefficient, $\mu=\mu_a$ represents the absorption coefficient.  $u$ is the photon density, $h$ is the interior source, and $k=\omega/c$ for $\omega$ the frequency of the source and $c$ the velocity of light in the medium.  The solution $u$ represents the photon density.  The model was derived in detail in \cite{Arridge:1999}.  With this DOT model in hand, we have the Neumann
\begin{align*}
 -\nabla \cdot \left( D \nabla u \right) + (\mu + ik) u &= h, \quad \mbox{ in } \Omega\\
D \frac{\partial u}{\partial \nu}&=   f, \quad \mbox{on } \partial \Omega 
\end{align*}
forward problem.  The solutions to this problem is referred to as $F_N^{(k,q)}(h,g)$ where $q=(D,\mu)$ represents the parameters.  It is assumed that $f\in H^{-1/2}(\partial\Omega)$ and $g\in H^{1/2}(\partial\Omega)$.  Further, $\gamma_0:H^1(\Omega)\rightarrow H^{1/2}(\partial\Omega)$ denotes the Dirichlet trace map.  For the purposes of this manuscript, the homogeneous case is considered with $h=0$.

To simplify the solution space, it is assumed that $D,\mu\in L^\infty(\Omega)$, $0<D_0<D(x)<D_1$, and $0<\mu(x)<\mu_1<\infty$ for positive real constants $D_0, D_1$ and $\mu_1$.  Then the resulting photon density $u$ belongs to $H^1(\Omega)$.  The well-posedness of these problems has been shown in \cite{Dierkes:2002}. In \cite{natterer2002frechet}, the authors shows the uniqueness of the forward DOT operator. 
%%%%%%%%%%%%%%%%%%%%%%%%%%%%
\section{Fr\'{e}chet differentiability of the DOT operator}\label{diff}
The inverse problem of the DOT problem is to estimate the diffusion ($D$) and the absorption ($\mu$) coefficient for a source function $f$. We solve the inverse problem using iteratively regularized Gauss-Newton (IRGN) method, for which a Fr\'{e}chet differentiablity of the map $\gamma_0{F_N}^{(k,q^\dagger)}(0,f)$ is a necessary tool. Recall that Fr\'{e}chet differentiability of $\gamma_0{F_N}^{(k,q^\dagger)}(0,f)$ is defined as
\begin{align*}
    \lim_{||\eta||_\infty \to 0} & \frac{||\gamma_0{F_N}^{(k,q^\dagger)}(q+\eta:0,f)-\gamma_0{F_N}^{(k,q^\dagger)}(q:0,f)-\gamma_0{F'_N}^{(k,q^\dagger)}(q:0,f)\eta||}{||\eta||_\infty}\\ &= 0
\end{align*}
\begin{thm}\label{frechet_diff}
Suppose $q_1 = (D_1,\mu_1), q_2 = (D_2,\mu_2)$ are two pairs of real-valued $L_\infty(\Omega)$ functions satisfying,
\[ 0 < m_D \leq D_{1,2}(x) \leq M_D, \quad 0 \leq \mu_{1,2}(x) \leq M_\mu, \mbox{ for some } m_D, M_D, M_\mu >0 \]
then there exists a constant $C$, such that,
\begin{align} \label{frechet}
    ||\gamma_0{F_N}^{(k,q^\dagger)}(q_2:0,f)-\gamma_0{F_N}^{(k,q^\dagger)}(q_1:0,f)-&\gamma_0{F'_N}^{(k,q^\dagger)}(q_1:0,f)\eta|| \nonumber \\ &
    \leq C ||\eta||_\infty^2
\end{align}
where $\eta = q_2 - q_1$. In particular, $\gamma_0 {F'_N}^{(k,q^\dagger)}(q_1:0,f)$ is the Fr\'{e}chet derivative of $\gamma_0{F_N}^{(k,q^\dagger)}(0,f)$ with respect to $q$ at $q_1$.
\end{thm}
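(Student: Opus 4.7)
The plan is to combine the standard PDE-based argument for Fréchet differentiability of a parameter-to-solution map with the well-posedness of the Neumann forward problem from Section \ref{Model}. Write $u_j := F_N^{(k,q_j)}(0,f)\in H^1(\Omega)$ for $j=1,2$, let $\delta D := D_2-D_1$ and $\delta\mu := \mu_2-\mu_1$, and define the candidate Fréchet derivative $w := {F'_N}^{(k,q^\dagger)}(q_1:0,f)\eta$ as the solution of the linearized Neumann problem
\begin{align*}
-\nabla\cdot(D_1\nabla w)+(\mu_1+ik)\,w &= \nabla\cdot(\delta D\,\nabla u_1) - \delta\mu\, u_1 \quad \text{in }\Omega,\\
D_1\,\frac{\partial w}{\partial\nu} &= -\,\delta D\,\frac{\partial u_1}{\partial\nu} \quad \text{on }\partial\Omega,
\end{align*}
obtained by formally differentiating the forward system in $q$. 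The principal part and complex lower-order term match those of the forward problem, so the well-posedness of \cite{Dierkes:2002} produces $w$ uniquely in $H^1(\Omega)$ and shows that $\eta \mapsto w$ is linear and bounded.

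Next I would derive the PDE satisfied by the remainder $v := u_2-u_1-w$. Subtracting the equations for $u_1$, $u_2$ and $w$, and using $D_j\partial_\nu u_j=f$ on $\partial\Omega$ for both $j$, a direct computation yields
\begin{align*}
-\nabla\cdot(D_1\nabla v)+(\mu_1+ik)\,v &= \nabla\cdot\bigl(\delta D\,\nabla(u_2-u_1)\bigr) - \delta\mu\,(u_2-u_1) \quad \text{in }\Omega,\\
D_1\,\frac{\partial v}{\partial\nu} &= -\,\delta D\,\frac{\partial(u_2-u_1)}{\partial\nu} \quad \text{on }\partial\Omega.
\end{align*}
The critical observation is that every piece of the data is bilinear in $\eta$ and $u_2-u_1$. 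Applying the same well-posedness estimate to the Neumann problem for $v$ therefore gives
\[
\|v\|_{H^1(\Omega)} \;\leq\; C_1\,\|\eta\|_\infty\,\|u_2-u_1\|_{H^1(\Omega)}.
\]

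To close the argument I would bound $u_2-u_1$ by $\|\eta\|_\infty$ by the same mechanism. The equation for $u_2-u_1$ has right-hand side linear in $\eta$ and driven only by $u_1$ (with analogous Neumann datum driven by $\partial_\nu u_1$), while the uniform coefficient bounds $m_D\leq D_j\leq M_D$, $0\leq\mu_j\leq M_\mu$ together with $f\in H^{-1/2}(\partial\Omega)$ provide a uniform $H^1$ bound on $u_1$. Well-posedness then gives $\|u_2-u_1\|_{H^1}\leq C_2\,\|\eta\|_\infty$. Chaining with the previous estimate yields $\|v\|_{H^1(\Omega)}\leq C\,\|\eta\|_\infty^2$, and the continuity of the Dirichlet trace $\gamma_0\colon H^1(\Omega)\to H^{1/2}(\partial\Omega)$ converts this into \eqref{frechet}. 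The limit definition of Fréchet differentiability then identifies $\gamma_0 w$ as the derivative of $\gamma_0 F_N^{(k,q)}(0,f)$ at $q_1$.

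The main obstacle will be the coercivity needed for the three applications of well-posedness, since the operator $-\nabla\cdot(D_1\nabla\,\cdot)+(\mu_1+ik)\,\cdot$ is not Hermitian and $\mu_1$ is only assumed non-negative. The cleanest route is to work in the sesquilinear framework, test against $\bar v$, and split into real and imaginary parts: the real part supplies the ellipticity $m_D\|\nabla v\|_{L^2}^2$ plus a non-negative zero-order term, while the imaginary part $k\|v\|_{L^2}^2$ controls the remaining $L^2$ mass. A secondary technical nuisance is that the Neumann datum $\delta D\,\partial_\nu(u_2-u_1)$ only lives in $H^{-1/2}(\partial\Omega)$, so the boundary pairing in the variational formulation for $v$ must be estimated via the trace inequality; this is precisely where the factor of $\|\eta\|_\infty$ from the boundary combines with the interior factor to produce the quadratic right-hand side in \eqref{frechet}.
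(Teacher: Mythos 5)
Your proposal is essentially correct, but note that the paper itself contains no proof of this theorem: it simply defers to \cite{natterer2002frechet}, so there is no in-paper argument to compare against. What you have written is a faithful reconstruction of the standard bilinearity argument that the cited reference uses --- the key point being that the remainder $v=u_2-u_1-w$ solves a Neumann problem whose data are bilinear in $\eta$ and $u_2-u_1$, so that two applications of the well-posedness estimate (one to get $\|u_2-u_1\|_{H^1}\lesssim\|\eta\|_\infty$, one to get $\|v\|_{H^1}\lesssim\|\eta\|_\infty\|u_2-u_1\|_{H^1}$) yield the quadratic bound, after which the trace map gives \eqref{frechet}. Two refinements: first, the ``technical nuisance'' of pairing $\delta D\,\partial_\nu(u_2-u_1)\in H^{-1/2}(\partial\Omega)$ against traces is actually not an issue if you pass directly to the weak formulation, since the boundary term produced by integrating $\nabla\cdot(\delta D\,\nabla(u_2-u_1))$ by parts cancels exactly against the Neumann datum, leaving only the interior pairing $-\int_\Omega \delta D\,\nabla(u_2-u_1)\cdot\nabla\bar\varphi\,dx$, which is manifestly bounded by $\|\eta\|_\infty\|u_2-u_1\|_{H^1}\|\varphi\|_{H^1}$; you should state the linearized and remainder problems weakly from the outset to avoid ever multiplying an $H^{-1/2}$ distribution by an $L^\infty$ function. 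Second, your coercivity argument via the imaginary part genuinely requires $k\neq 0$ (otherwise, with $\mu_1$ allowed to vanish, the pure Neumann problem has constants in its kernel); this is consistent with the frequency-domain setting of the paper but is worth flagging as a hypothesis, since the theorem statement only assumes $\mu_{1,2}\geq 0$.
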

% \textbf{Theorem.}  \\
\begin{proof}
See \cite{natterer2002frechet}.
\end{proof} 
%%%%%%%%%%%%%%%%%%%%%%%%%%%%
\section{Inverse Problem}\label{inverse}
The Neumann-to-Dirichlet map $\gamma_0F_N^{(k,q^\dagger)}(0,f)$ amounts to conducting the forward problem experiment over the true parameters $q^\dagger$.  Because of imprecise measurements, the boundary information used is denoted
\[ g^\gamma = \gamma_0F_N^{(k,q^\dagger)}(0,f)+\xi\]
where $\xi$ represents the noise. 
The forward problem uses $q$ to find the boundary data associated with a given source $f$. The inverse problem is to estimate the parameter $q = (D,\mu)$, for a set of source functions. Since the forward problem is well-posed, it is sufficient to consider $q \in \{ L_\infty \times L_\infty: 0 < m_D \leq D \leq M_D, 0 < \mu < \infty \}$. However, in order to establish the uniqueness of the solution to the inverse problem, we consider $q \in \{H_2(\Omega) \times L_2(\Omega): 0 < m_D \leq D \leq M_D, 0 < \mu < \infty \}$.
\begin{thm}
A solution $q$ to the DOT inverse problem exists and is unique. That is given the measurements on the boundary, we can reconstruct a unique spatial map for the parameter $q$.
\end{thm}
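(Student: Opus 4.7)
The plan is to dispatch existence quickly and spend the effort on uniqueness. For existence, in the noise-free setting $\xi = 0$, the measurement $g^\gamma$ was generated by some admissible $q^\dagger = (D^\dagger,\mu^\dagger)$, so $q^\dagger$ itself is a solution of the inverse problem. In the noisy setting, existence of a minimizer of the data-fit functional follows routinely from weak lower semicontinuity of the residual together with the weak compactness of bounded sets in $H^2(\Omega)\times L^2(\Omega)$, using the weak continuity of the forward operator granted by the Fr\'echet differentiability of Theorem~\ref{frechet_diff}.

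The substantive content is uniqueness, for which I would employ the standard Liouville reduction of the divergence-form equation to a Schr\"odinger equation. The substitution $v = D^{1/2} u$ transforms
\[ -\nabla\cdot(D\nabla u) + (\mu + ik)u = 0 \quad\text{into}\quad -\Delta v + Q\,v = 0, \]
where
\[ Q(x) = \frac{\Delta\bigl(D^{1/2}\bigr)}{D^{1/2}} + \frac{\mu + ik}{D}. \]
The $H^2$-regularity of $D$ required by the admissible set is precisely what guarantees $Q \in L^2(\Omega)$, so the reduction is well defined. The Neumann-to-Dirichlet map of the original problem converts, up to the boundary values of $D^{1/2}$ and its normal derivative, into the boundary map of $-\Delta + Q$.

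I would then invoke the Calder\'on--Sylvester--Uhlmann (for $n\ge 3$) or Nachman (for $n=2$) machinery: complex geometric optics solutions of the Schr\"odinger equation, inserted into the bilinear form associated with the boundary map, force the potential $Q$ to be uniquely determined throughout $\Omega$. Once $Q$ is known, the pair $(D,\mu)$ is recovered by splitting real and imaginary parts; since $k\in\mathbb{R}\setminus\{0\}$ and both $D,\mu$ are real, one has $\operatorname{Im}Q = k/D$, which pins down $D$ pointwise, and then $\operatorname{Re}Q - \Delta(D^{1/2})/D^{1/2} = \mu/D$ pins down $\mu$.

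The main obstacles are twofold. First, the CGO argument requires a priori knowledge of $D$ (and its normal derivative) on $\partial\Omega$, so one must either assume this as part of the admissible class or insert a preliminary boundary-determination step of Kohn--Vogelius type using high-frequency oscillatory test functions against the boundary form. Second, and more fundamentally, the decoupling of $D$ and $\mu$ via the imaginary contribution $ik/D$ is what rescues the problem from the classical Arridge non-uniqueness of steady-state DOT; if $k=0$ the argument collapses and an additional prior on either $D$ or $\mu$ is needed to restore uniqueness. Granted $k\neq 0$ together with the stated $H^2\times L^2$ regularity, the reduction-and-CGO route yields the full claim, and the details may be read off from the companion uniqueness result of \cite{natterer2002frechet}.
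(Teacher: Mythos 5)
Your proposal is essentially the paper's own route: the paper offers no argument of its own but simply cites Sylvester--Uhlmann \cite{sylvester1987global}, and what you have written out --- the Liouville substitution $v = D^{1/2}u$ reducing to $-\Delta v + Qv = 0$, complex geometric optics solutions determining $Q$, and the decoupling of $D$ and $\mu$ through $\operatorname{Im}Q = k/D$ when $k\neq 0$ --- is precisely the content of that citation adapted to frequency-domain DOT. Your added caveats (the need for boundary determination of $D$ and the collapse of uniqueness at $k=0$, i.e.\ the Arridge--Lionheart obstruction) are correct and are in fact more careful than the paper's one-line deferral.
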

\begin{proof}
The proof of this theorem is well-established in the literature, see \cite{sylvester1987global}.
\end{proof}
Since the inverse problem is severely ill-posed, in order to recover $q$, we minimize the following cost functional from the finite set of measurements. 
\begin{align*}
J_\gamma(q) = ||\gamma_0F_N^{(k,q)}(0,f) - g^\gamma||_2^2	
\end{align*}
where $g^\gamma$ approximates the exact data $g$ with the accuracy $\xi$, i.e.,
\begin{equation}\label{noise}
||g-g^\gamma|| < \xi.
\end{equation}
However, regularization is needed to improve the ill-posed problem and instead, we minimize,
\begin{align*}
J_\gamma(\alpha,q) = ||\gamma_0F_N^{(k,q)}(0,f) - g^\gamma||_2^2 + \alpha R(q-q^*)
\end{align*}
where $\alpha$ is the regularization parameter, $R(\cdot)$ is the regularization term and $q^*$ is the known background. \\
The regularization function is represented by a norm for most analytical methods. 
In this paper we used one of the most successful methods for solving the ill-conditioned problem, $R_{\ell_2}$ known as Tikhonov regularization. The cost functional from Tikhonov regularization is
\begin{equation}\label{cost}
J_\gamma(\alpha,q) = \frac{1}{2}||\gamma_0F_N^{(k,q)}(0,f) - g^\gamma||_2^2+ \frac{\alpha}{2}||\nabla q||_2^2 %||W(q-q^*)||_2^2,
\end{equation}%where $W$ is weight matrix. 
There are several iterative approaches to minimize (\ref{cost}). In this paper, we used a modified iteratively regularized Gauss-Newton (IRGN) method for the minimization.\\
Suppose $\alpha_k$ is some sequence of regularizing parameters satisfying the conditions
\begin{equation}\label{lambda}
\alpha_k \geq \alpha_{k+1} >0, \quad \sup_{k \in \mathbb{N}\cup \{0\} } \frac{\alpha_k}{\alpha_{k+1}} = \hat{d} < \infty, \quad \lim_{k \to \infty} \alpha_k = 0.
\end{equation}
Let the unique global minimum of (\ref{cost}) be denoted by $\tilde{q}$. Assume $\tilde{\gamma}$ satisfies the invertibility conditions and the conditions for $F$ as described in \cite{Smirnova:ip}. Then, the unique global minimum of (\ref{cost}) is explicitly given by
\begin{align}
\tilde{q} = &q_{k} - (\gamma_0 F'(q_k)^T \gamma_0 F'(q_k) + \alpha_k \triangle_k q)^{-1} \nonumber\\
&\{\gamma_0 F'(q_k)^T (\gamma_0 F(q_k) - g^\gamma)+\alpha_k \triangle_k q \} 
%W_2 (q_k-q^*)\},
\end{align}
where $q_k$ is the $k$-th approximation to $q$ and $\gamma_0 F'(q_k)$ is the Jacobian matrix at the $k$-th iteration. The term $\triangle_k$ is the discrete Laplacian operator.%, and $W_2 = W^T W$. 
The above algorithm is generalized further using a line search procedure, as discussed in \cite{Smirnova:ip}, by introducing a variable step size, $s_k$, such that
\begin{equation}\label{step}
0 < s < s_k \leq 1.
\end{equation}
The modified IRGN algorithm is then 
\begin{align}\label{irgn}
q_{k+1} = & q_{k} - s_k(\gamma_0 F'(q_k)^T \gamma_0 F'(q_k) + \alpha_k \triangle_k q_k)^{-1} \nonumber \\ 
& \{\gamma_0 F'(q_k)^T (\gamma_0 F(q_k) - g^\gamma)+\alpha_k \triangle_k q_k)\}.
\end{align}
Due to the inexact nature of $g^\gamma$, we adopt a stopping rule from \cite{bakushinsky2005application} to terminate the iterations at the first index $\mathcal{K} = \mathcal{K}(\delta)$, such that
\begin{equation}\label{stop}
||\gamma_0 F(q_\mathcal{K}) - g^\gamma||^2\leq \rho \xi < ||\gamma_0 F(q_k) - g^\gamma||^2, \quad 0 \leq k \leq \mathcal{K}, \quad \rho > 1.
\end{equation}
The line search parameter $s_k$ is chosen to minimize the scalar objective function
\begin{equation}\label{stsearch}
\Phi(s) = J(q_k+s p_k)
\end{equation}
where $p_k$ is the search direction, which solves
\begin{equation}\label{dir}
(\gamma_0 F'(q_k)^T \gamma_0 F'(q_k) + \alpha_k \triangle_k q_k)p_k = -\gamma_0 F'(q_k)^T (\gamma_0 F(q_k) - g^\gamma)+\alpha_k \triangle_k q_k)
\end{equation}
This step is accomplished through a backtracking strategy until either one of the strong Wolfe conditions,
\begin{eqnarray}
J(q_k+s p_k) \leq J(q_k) + \gamma_1 s \nabla J(q_k)^T p_k \label{wolf1}\\
|\nabla J(q_k+s p_k)^T p_k| \leq |\gamma_2 \nabla J(q_k)^T p_k|. \label{wolf2}
\end{eqnarray}
is satisfied \cite{nocedal1999numerical}, or the maximum number of backtracking steps has been reached. We use the theoretically derived values of $\gamma_1 = 0.0001$ and $\gamma_2 = 0.9$, derived in \cite{nocedal1999numerical}. The basic convergence result for the IRGN algorithm (\ref{irgn}) combined with the stopping rule (\ref{stop}) is established by the theorem provided in the next section.
%%%%%%%%%%%%%%%%%%%%%%%%%%
\section{Convergence of IRGN}\label{irgn}
Assume that $F$ is a nonlinear operator acts on the Hilbert spaces $(H,H_1)$, $F:D(F) \subset H \to H_1 $, and $F$ is Fr\'{e}chet differentiable in $D(F)$. We consider minimizing the functional 
\begin{equation}
    J(q) = ||F(q)-g_\delta||^2_{H_1}
\end{equation}
where $g_\delta$ approximates the exact data $g$ with the accuracy $\delta$, i.e.,
\begin{equation}
    ||g-g_\delta|| \leq \delta \label{noise}
\end{equation}
Our interest is to find an element $\hat{q} \in D(F)$, s.t.
\begin{equation}
    ||F(\hat{q})-g||_{H_1} = \inf_{q \in D(F)} ||F(q)-g||_{H_1} = 0 \label{existence}
\end{equation}
Consider the following conditions hold
\begin{align}
    ||F'(q_1)|| &\leq M_1, \quad \mbox{ for any } q_1 \in B_\eta(\hat{q}) \label{M1} \\
    ||F'(q_1)-F'(q_2)|| &\leq M_2 ||q_1-q_2||, \mbox{ for any } q_1,q_2 \in B_\eta(\hat{q}) \label{M2}
\end{align}
where $B_\eta(\hat{q}) = \{ q \in H : ||q-\hat{q} || \leq \eta \} \subset D(F)$.
The convergence analysis of IRGN is done under the source condition 
\begin{align}
    L^*L(\hat{q} - q) = F'^*(\hat{q}) S, \quad S = \{ v \in H: ||v|| \leq \varepsilon \} \label{source}
\end{align}
and by the following theorem.
\begin{thm}\label{conv_thm}
Assume that\\
(1) $F$ satisfies (\ref{M1}) and (\ref{M2}) with $\eta = l \sqrt{\tau_0}$, conditions (\ref{noise}) and (\ref{existence}) holds.\\
(2) The regularization sequence $\{\alpha_k\}$ and the step size sequence $\{s_k\}$ are chosen according to (\ref{lambda}) and (\ref{step}), respectively. \\
(3) Source condition (\ref{source}) is satisfied. \\
(4) The linear operator $L^*L$ is surjective and there is a constant $m>0$ such that 
\begin{align}
    \langle L^*Lh,h \rangle \geq m ||h||^2, \quad \mbox{ for any } h \in H \label{LL}
\end{align}
(5) Constants defining $F$ and the iteration are constrained by 
\begin{align}
    \frac{M_2 \varepsilon}{m}+\frac{d-1}{d\alpha}+ \sqrt{\frac{\varepsilon}{m} \left( \frac{M_2}{2}+\frac{M_1^2}{(\sqrt{\rho}-1)^2} \right) } \leq 1 \label{Mbound}\\
    \frac{||q_0-\hat{q}||}{\sqrt{\alpha_0}} \leq \frac{\varepsilon}{\sqrt{m}\left( 1-\frac{M_2 \varepsilon}{m}-\frac{d-1}{d\alpha}\right)} = l \label{lbound}
\end{align}
Then,\\
(1) For iterations (\ref{irgn})
\begin{align}
    \frac{||q_k-\hat{q}||}{\sqrt{\alpha_k}}\leq l, \quad k = 0,1,..., \mathcal{K}(\delta) \label{iterbound}   
\end{align}
(2) The sequence $\{\mathcal{K}(\delta) \}$ is admissible, i.e. 
\begin{align}
    \lim_{\delta \to 0}||q_{\mathcal{K}(\delta)}-z|| = 0, \label{converge}
\end{align}
$z$ is $\arg \inf_{q\in D(F)}||F(q)-g||_{H_1}$.
\end{thm}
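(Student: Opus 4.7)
The plan is to prove the two conclusions in sequence, starting with the uniform bound (\ref{iterbound}) by induction on $k$ and then deducing admissibility (\ref{converge}) from it together with the discrepancy stopping rule (\ref{stop}).

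First, for the inductive bound, the base case $k=0$ is precisely hypothesis (\ref{lbound}). For the inductive step I would introduce the error $e_k = q_k - \hat{q}$ and substitute (\ref{irgn}) to obtain an expression of the form
\begin{align*}
e_{k+1} = (I - s_k T_k^{-1} A_k) e_k + s_k T_k^{-1} \alpha_k \triangle_k \hat{q} + s_k T_k^{-1}\gamma_0 F'(q_k)^*\bigl[\gamma_0 F(q_k) - \gamma_0 F(\hat q) - \gamma_0 F'(q_k)e_k\bigr] + s_k T_k^{-1}\gamma_0 F'(q_k)^*(g-g^\gamma),
\end{align*}
where $A_k = \gamma_0 F'(q_k)^*\gamma_0 F'(q_k)$ and $T_k = A_k + \alpha_k\triangle_k$. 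I would then bound each of the four pieces: the linearization remainder is quadratic in $\|e_k\|$ by (\ref{M2}), the noise term is controlled by $\delta$ (or $\xi$) and the stopping condition (\ref{stop}), the regularization term is handled by inserting the source representation (\ref{source}) so that $\alpha_k T_k^{-1}\triangle_k \hat{q}$ becomes $\alpha_k T_k^{-1}\gamma_0 F'(\hat{q})^* S$ which can be bounded using (\ref{M1}), and the contractive term is controlled by (\ref{LL}) and the boundedness of the spectral function $\lambda\mapsto \lambda/(\lambda+\alpha_k)$.

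Combining these estimates and dividing by $\sqrt{\alpha_{k+1}}$, the ratio $\alpha_k/\alpha_{k+1} \leq \hat d$ from (\ref{lambda}) appears, and after collecting constants the inductive bound $\|e_{k+1}\|/\sqrt{\alpha_{k+1}} \leq l$ follows provided the constants satisfy (\ref{Mbound}); indeed, (\ref{Mbound}) is precisely engineered so that the algebraic inequality one gets after the triangle inequality closes up. For the admissibility statement, I would split into two cases: if $\mathcal{K}(\delta)\to\infty$ as $\delta\to 0$, then the classical Tikhonov-type argument (using the source condition and $\alpha_k\to 0$) gives $q_{\mathcal{K}(\delta)}\to \hat q = z$; if $\mathcal{K}(\delta)$ stays bounded along a subsequence, then by (\ref{stop}) the residual $\|\gamma_0 F(q_{\mathcal{K}(\delta)}) - g\| \to 0$, and continuity of $F$ together with uniqueness from Theorem~\ref{frechet_diff}'s underlying framework forces the limit to coincide with $z$.

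The main obstacle I expect is the bookkeeping in the inductive step: ensuring that the recursive inequality for $\|e_{k+1}\|/\sqrt{\alpha_{k+1}}$ genuinely closes at level $l$ rather than drifting, which requires threading the source condition (\ref{source}) through the operator identity $L^*L(\hat q - q) = F'^*(\hat q) S$ precisely at the right step so that the regularization bias term acquires the factor $\sqrt{\varepsilon/m}$ that appears in (\ref{Mbound}). The inequality (\ref{Mbound}) leaves almost no slack, so the tightness of each sub-estimate (especially the interplay between $(\sqrt\rho - 1)^{-2}$ from (\ref{stop}) and the spectral bound on $A_k T_k^{-1}$) must be handled carefully. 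Once the induction closes, admissibility is a relatively standard consequence, and I would appeal to \cite{bakushinsky2005application,Smirnova:ip} for the limiting arguments that are purely technical.
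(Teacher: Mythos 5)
The paper does not actually prove Theorem~\ref{conv_thm}: its ``proof'' is a citation to \cite{Smirnova:ip}, and your outline is a faithful reconstruction of the standard argument given there (induction on $\|q_k-\hat q\|/\sqrt{\alpha_k}$ via an error decomposition into contraction, regularization bias, linearization remainder, and noise, with the source condition and (\ref{LL}) absorbing the bias and the discrepancy rule (\ref{stop}) handling admissibility), so the approaches coincide. One algebraic point to tidy up: since $T_k = A_k + \alpha_k L^*L$, the leading term of your decomposition collapses to $(1-s_k)e_k$ rather than $(I-s_kT_k^{-1}A_k)e_k$, and the bias term $s_kT_k^{-1}\alpha_k L^*L\hat q$ enters with a minus sign; also note that under (\ref{existence}) the limit point $z$ is just $\hat q$, which simplifies your case analysis for (\ref{converge}).
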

\begin{proof}
See \cite{Smirnova:ip}, for the details of the proof.
\end{proof}
%%%%%%%%%%%%%%%%%%%%%%%%%%%%%%%
\section{Statistical Inverse Problem}\label{inversion}
Modeling the forward problem uses knowledge of the absorption, $\mu_a$, and the scattering, $\mu_s'$ parameters to find the boundary data associated with a given source.   The inverse problem instead uses knowledge of the source and boundary data, and seeks to uncover the absorption and scattering parameters.  Our stated goal is to recover $\mu_a$, and $\mu_s'$. However, considering the proposed model for optical tomography in this document, this is equivalent to the recovery of $q=(D,\mu)$.  These parameters have a high contrast between a tumor and the background and consequently acquiring an accurate representation of these parameters will allow for the detection of homogeneity's.  

Because, we can not directly calculate the inverse to the forward problem, the inverse problem can be represented as a minimization problem of a cost functional. One possibility is using the standard least squares fitting as discrepancy functional,
\[ J(q)=\frac{1}{2}\left\|\gamma_0F_N^{(k,q)}(0,f)-g^\gamma\right\|^2_{L_2(\partial\Omega)}\]
for a source $f$ and measurement $g^\gamma$. However, minimizing $J(q)$ won't result in a proper reconstruction due to the high ill-posedness of the problem. Hence, typical analytical approaches try to find an estimate $\hat{q}$ of $q$ by solving
\[\hat{q}=argmin_{q}J(q) + \alpha R(q)\]
where $R(q)$ is a regularization function and $\alpha>0$ the regularization parameter. In this section we shall reformulate this minimization problem into finding a Bayes' estimate from a posterior density.

\subsection{The Posterior Density}

With a slight abuse of notation we denote $D,\mu,q, g$ as its continuous setting as well as its discrete analogue. Further, let $\epsilon$ be the discrete measurement noise of the measurements $g$. Let $q^*, D^*, \mu^*, \epsilon^* $ and $g^*$ be the random variables for the deterministic values $q, D, \mu, \epsilon$ and $g$ respectively. Let $D, \mu$ be $n\in\mathbb{N}$ dimensional parameters. Hence, $q$ is a $2n$ dimensional parameter to be recovered. The measurements $g$ and the measurement noise $\epsilon$ are assumed to be both $m\in\mathbb{N}$ dimensional parameters. Let $\Theta(q,\epsilon)$ denote the discrete operator that solves the forward problem on $\partial\Omega$. That is, if the forward problem is well-posed, it follows that $g=\Theta(q,\epsilon)$.

The solution of a statistical inverse problem of the parameter $q$ is generally defined as the posterior density of $q^*$ given the measurements $g$ and the measurement noise $\epsilon$, e.i. $\pi_{q^*}(q|g,\epsilon)$.  The posterior density  $\pi_{q^*}(q|g,\epsilon)$ shall be derived first. From the statistical point of view it is clear that
 \[\pi_{g^*}(g|q,\epsilon)=\delta(g-\Theta(q,\epsilon)),\]
 where $\delta$ is the Dirac delta function. The Dirac delta function is not a function in a strict sense, however the integral over $\delta(\cdot)$ is in fact a distribution function. The joint density of $q^*, g^*$ and $\epsilon^*$ is
 \[\pi_{q^*, g^*, \epsilon^*}(q, g, \epsilon)=\pi_{g^*}(g| q, \epsilon)\pi_{q^*, \epsilon^*}(q, \epsilon)=\delta(g-\Theta(q,\epsilon))\pi_{q^*, \epsilon^*}(q, \epsilon).\]
However, the measurement noise $\epsilon$ is a unobservable quantity and should be integrated out of the model. The joint density of $q^*$ and $ g^*$ is obtained by
 \begin{equation}\pi_{q^*, g^*}(q, g)=\int_{\mathbb{R}^m}\pi_{q^*, g^*, \epsilon^*}(q, g, \epsilon)d\epsilon=\int_{\mathbb{R}^m}\delta(g-\Theta(q,\epsilon))\pi_{q^*, \epsilon^*}(q, \epsilon)d\epsilon.\label{simpThisEq}\end{equation}
In order to solve this integral more assumptions on the measurement noise $\epsilon^*$ are required. Assume that the measurement noise $\epsilon^*$ is independent from the variable of interest $q^*$, i.e. $\pi_{q^*, \epsilon^*}(q, \epsilon)=\pi_{q^*}(q)\pi_{\epsilon^*}(\epsilon)$, and that the measurement noise is additive, i.e. $g=\Theta(q)+\epsilon$. Now equation (\ref{simpThisEq}) can be simplified to
 \[\pi_{q^*, g^*}(q, g)=\int_{\mathbb{R}^m}\delta(g-\Theta(q)-\epsilon)\pi_{q^*}(q)\pi_{\epsilon^*}(\epsilon)d\epsilon=\pi_{q^*}(q)\pi_{\epsilon^*}(g-\Theta(q)).\]
It follows that the posterior density of $q^*$ given the measurements $g$ is 
\begin{equation}\label{posteriorDist}
\pi_{q^*}(q| g)=\frac{\pi_{q^*, g^*}(q, g)}{\pi_{g^*}(g)}= \frac{\pi_{q^*}(q)\pi_{\epsilon^*}(g-\Theta(q))}{\int_{\mathbb{R}^{2n}} \pi_{q^*}(q)\pi_{\epsilon^*}(g-\Theta(q))dq}\propto \pi_{q^*}(q)\pi_{\epsilon^*}(g-\Theta(q)), 
\end{equation}
where $\int_{\mathbb{R}^{2n}} \pi_{q^*}(q)\pi_{\epsilon^*}(g-\Theta(q))dq>0$ is a unknown constant. To obtain a proper defined posterior density the densities of $q^*$ and $\epsilon^*$ must be specified. Assume that the measurement noise is Gaussian distributed, i.e. 
\[\pi_{\epsilon^*}(g-\Theta(q))\propto\exp\left[-\frac{1}{2}(g-\Theta(q))'C^{-1}(g-\Theta(q))\right].\]
Further, assume that the prior density, the density of $q^*$, is 
\[\pi_{q^*}(q)\propto \chi_A(q)\exp[\alpha R(q)],\]
where $R(\cdot)$ is a regularization function, $\alpha>0$ is a constant and $\chi_A(q)$ is a indicator function with $A=(0,\infty)^{2n}$. 

\subsection{Regularizing Functions}

In DOT regularization functions are required to obtain proper reconstructions despite the high ill-posed of the problem. In this section several choices for the regularization function $R(\cdot)$ %which are used in the analytical and  the statistical setting 
shall be introduced.
However, in the statistical setting somewhat more general versions of the classical analytical choices can be obtained.

In DOT regularization function for $D$ and $\mu$ must be chosen simultaneously. In our experience proper regularized reconstructions for $D$ and $\mu$ are obtained is by letting 
 \begin{equation}\label{DOTregExample}
 R(q)=R(D,  \mu)=\beta_1R\left(\frac{\mu}{\mu^b}\right)+\beta_2R\left(\frac{\mu_s'}{\mu_s^b}\right),
  \end{equation}
 where $R$ is a regularization functions, $\beta_1, \beta_2>0$ with $\beta_1+ \beta_2=1$ are constants, $\mu^b$, and $\mu_s^b$ are the expected backgrounds of  $\mu$, and $\mu_s'$ respectively, where 
 \[\mu_s'=\frac{1}{3D}-\mu.\]
The reason for dividing $\mu$, and $\mu_s'$, by their typical backgrounds is that it simplifies the choice of $\beta_1$, and $\beta_2$. That is, setting $\beta_1=\beta_2=\frac{1}{2}$ could be a reasonable choice, if an approximately liner relationship between $\mu$, and $\mu_s'$ can be assumed.
%%%%%%%%%%%%%%%%%%%%%%%%%%%%%%%%%%
\subsubsection{The $\ell_p$ Regularizations} \label{lpreg}
The $\ell_p$ regularization $R_{\ell_p}(y)$ is defined as
\[ R_{\ell_p}(y):=\sum_{i=1}^n c_i|y_i-y^b_i|^p, \]
where $c_i$ represent weights, $0<p\leq 2$ is a constant and $y^b$ the typical background from $y$. In case that the regularization of (\ref{DOTregExample}) is chosen, it follows that $y^b=1$. Note that $R_{\ell_p}(y)$ is a norm if $p\geq1$ and would only define a metric in case $0<p<1$. For analytical methods it is usually necessary that the regularization function  represents a norm, while for statistical reconstruction the case when $0<p<1$ can potentially also be handled. 

%In theory a good choice for the weights would be large values at the boundary and exponentially decreasing values towards the center of $\Omega$. This is because the variance  is smaller on the boundary than in the center \cite{palamodov2002gabor}. The $\ell_p$ regularization enforces sparsity when $0< p \leq 1$  and enforces smoothness when $p\geq 2$. 

\subsubsection{The Total Variation Regularization} \label{totalVar}

%The idea behind the total variation regularization is to obtain smooth images. This is meaningful in most practical applications. 

The total variation regularization is defined as 
\[R_{TV_c}(y_c):=\int_{\Omega}|\nabla y_c|dx, \]
where $y_c$ the continuous version of the parameter of interest  $y$. The discrete analogue for a two-dimensional body of the total variation regularization $R_{TV_c}$ \cite{kaipio2000statistical,burgel2017sparsity} is 
\[R_{TV}(y):=\sum_{i=1}^hl_i|\triangle_iy|,\]
where $l_i$ is defined as the length of the edge corresponding to the $i^{th}$ adjacent pixel and 
\[\triangle_i=(0, 0, ..., 0, 1_{a_{(1,i)}}, 0, ..., 0, -1_{a_{(2,i)}}, 0, ..., 0),\]
with $a=(a_{(j,i)})_{i=1, \text{ } j\in\{1,2\}}^h$ is the set containing the numbers of all adjacent pixel tuples $(a_{(1,i)}, a_{(2,i)})$.

\subsubsection{General Regularizations} \label{GRegul}

In general, there is no need to be restricted to the typical analytical choices, we can rather choose $\pi_{y^*}(y)$ to be any kind of prior density on $\Omega$. However, in order to obtain proper reconstructions it is recommended to choose from meaningful continuous regularization functions of $y$ which are integrable over $\Omega$. 
 
One prior that in our experience gives good results is considering both the $\ell_p$ and total variation prior,
\begin{equation}
R_{G}(\zeta)=\alpha_1R_{\ell_p}(y)+\alpha_2R_{TV}(y), \label{genReg}
\end{equation}
where $\alpha_1, \alpha_2>0$ are constants. This regularization function, with proper choice of $\alpha_1, \alpha_2>0$ usually leads to images which are have a smooth background image while detecting the anomalies accurately. 

 \subsection{The Markov Chain Monte Carlo Method} 

The posterior density with several meaningful prior densities (regularization's) has been introduced.  However, the abstract formulation of the posterior density tells little about the absorption coefficient $\mu$ and the diffusion coefficient $D$. That is, a estimator for $q^*=(\mu^*, D^*)$ given the measurements $g^*$ must be found, e.i. the Bayesian estimator
\[E(q^*| g)=\int_{\mathbb{R}^n}q\pi_{q^*}(q| g)dq.\]
Given that the posterior density $\pi_{q^*}(q| g)$ does not have a closed form, there is no direct method of finding the Bayesian estimate $E(q^*| g)$. However, the Markov Chain Monte Carlo Method (MCMC) can be used to generate a large random sample $\{q^{(i)}\}_{i=B+1}^N$ from the posterior density $\pi_{q^*}(q| g)$ in order to approximate the Bayesian estimate by its sample mean,
\begin{equation}
E(q^*|g)=\int_{\mathbb{R}^n}q\pi_{q^*}(q|g)dq\approx\frac{1}{N-B}\sum_{i=B+1}^Nq^{(i)},
\end{equation}
where $N$ is the total number of samples and $B$ is the burn in time. The burn in time $B$ is the number of random samples which can still not be considered as real random samples from the posterior density. Typical algorithms to generate such a random samples from a posterior density are the Gibbs sampler or the Metropolis-Hastings algorithm. The Metropolis-Hastings algorithm seems to be computationally less expensive than the Gibbs sampler for the considered problems. Here the Metropolis-Hastings algorithm shall be briefly described, for a more detailed description see \cite{chib1995understanding, strauss2015statistical}.

\subsection{The Metropolis-Hastings Algorithm} 

Consider a Markov chain on the continuous state space $(E, \mathscr{B}, \mathscr{M})$ where $\mathscr{B}$ is a Borel $\sigma$-algebra on $E$ and $\mathscr{M}$ the normalized Lebesgue measure on $E$. Let $E\subseteq  \mathbb{R}^d$ be the support from the target distribution. This means $E$ is the set containing all values a state $x^{(i)}$, of the chain $\{x^{(i)}\}$, can take. Furthermore, let $P(x;A)$ denote a  transition kernel for $x\in E$, where $A\in \mathscr{B}$. A transition kernel $P(x;A)$ represents the probability of jumping from a current state $x$ to another state in the set $A$. It is desirable to find a transition kernel $P(\cdot;\cdot)$ such that it converges to an invariant distribution $\pi^*$. Here $\pi^*$ represents the distribution of the posterior density $\pi$, e.i. the density defined in (\ref{posteriorDist}) when considering the DOT problem. After some analysis it can be shown that the transition kernel 
\[P_{MH}(x;A):=\int_A p(x,y)\alpha(x,y)dy+\left[1-\int_{\mathbb{R}^d}p(x,y)\alpha(x,y)dy \right]\chi_A(x),\]
converges to the invariant distribution $\pi^*$. Here $\chi_A$ is the indicator function over the set $A$, and $p(x, y)$ is a proposal density, that is, a density which generates a new candidate random sample $y$ from a current random sample $x$. For example, $p(x,\cdot)$ could be a multivariate normal density with mean $x$. The acceptance ratio 
\begin{align*}
    \alpha(x,y)= \left\{
\begin{array}{c l}      
    \min \left[\frac{\pi(y)p(y,x)}{\pi(x)p(x,y)}, 1 \right], & \mbox{if } \pi(x)p(x,y)>0, \\
    1, &  \mbox{otherwise},
\end{array}\right.
\end{align*} 
is the probability of accepting a new random sample $y$. Note that the formulation of the acceptance can be simplified if the proposal density is symmetric, i.e. $p(x,y)=p(y,x)$ $\forall x,y\in E$. 

The choice of a proper proposal density is vital for the Metropolis algorithm to obtain real random samples from the target distribution $\pi^*$ in a suitable amount of time. This choice is usually very complicated because the target density is generally unknown \cite{gelman1996efficient, haario2001adaptive, gilks1998adaptive}. One method of eliminating this problem is by using an adaptive Metropolis algorithm. Hence, iteratively adapting the proposal density based on previous samples from the chain. However, adaptive Metropolis algorithms usually produce a non-Markovian kind of chain, i.e. $P(X^{(n)}|X^{(0)}, X^{(1)}, ..., X^{(n-1)})\neq P(X^{(n)}|X^{(n-1)})$. This would require to establish the correct ergodic properties. 

The adaptive Metropolis algorithm, proposed in \cite{gilks1998adaptive}, changes the covariance matrix of the proposal distribution at atomic times in order to preserve the Markov property of the chain. An atomic time on a continuous state space  $(E, \mathscr{B}, \mathscr{M})$ is a set $A\in \mathscr{B}$ with $\pi^*(A)>0$ such that if $X^{(n)}\in A$ the chain $X^{(n+1)}, X^{(n+2)}, ...$ is independent of $X^{(0)}, X^{(1)}, ..., X^{(n)}$.  Even though this method appears to be very attractive it is practically very complicated to find proper atomic times for high dimensional problems  \cite{gilks1998adaptive}.  Another approach, introduced in \cite{haario2001adaptive}, is to adapt the covariance matrix of a normal proposal distribution in every iteration after an initial time $t_0$ in such a way that the correct ergodic properties of the chain can be established  even if the chain is itself non-Markovian. The approach used in this paper  is to adapt the proposal distribution a finite amount of times and begin the burn-in time after the last adaption \cite{straussstatistical, gelfand1994markov, strauss2015statistical, strauss2015statisticalDarcy}. This method can not guarantee that a optimal proposal distribution for the target distribution is obtained after the last adaption. However, the convergence speed respect to the classical Metropolis-Hastings algorithm is usually increased considerable while still maintaining all good properties of the chain after the pilot time. That is, the pilot adaptive Metropolis algorithm (Algorithm \ref{PilotA}) still generates a Markov chain after the pilot time.

\subsection{A Pilot Adaptive Metropolis Algorithm}

The idea of this algorithm is to update the proposal distribution by changing its covariance matrix in such a way that the acceptance ratio of the chain after the last adaption is close by the optimal acceptance ratio $a_o$ of the chain.  There is no analytical framework for the choice of such a optimal acceptance ratio $a_o$ when the target distribution is unknown. Hence, the choice of $a_o$ is usually based on the result from \cite{gelman1996efficient} where they show that for a normal target and proposal distribution  the optimal acceptance ratio is approximately $.45$ in the one dimensional case and $.234$ when the number of dimension converges to infinite. 

Suppose we wish to perform $M$ adaptions, one every $m$ iterations, where $1<mM<B<N$. Let $c_i$ denote a variable recording whether or not the $i$-th iteration of the considered algorithm has been accepted, 
\begin{align} 
c_i:=\left\{\begin{array}{c l}      
   1, & \mbox{$i$-th iteration has been accepted,}\\
    0, & \mbox{else.}
\end{array}\right.
\end{align}
The estimator for the acceptance ratio for the $j$-th proposal distribution is \\ $\bar{a}_j=\frac{1}{m}\sum_{i=(j-1)m+1}^{jm}c_i$. Let $1\gg\epsilon>0$, where $100\epsilon$ is the percentage of change per adaption in the entries of the covariance matrix $C$ of the proposal distribution. In other words, the  $j$-th adaption modifies the current covariance matrix $C_{j-1}$ in the following way,
\begin{equation}
C_{j}=\Xi_{PAM}(C_{j-1}):=\left\{\begin{array}{c l}      
   (1+\epsilon)C_{j-1}, & \mbox{if }  \bar{a}_j>a_o,\\
   C_{j-1}, & \mbox{ if }  \bar{a}_j=a_o,\\
    (1-\epsilon)C_{j-1}, & \mbox{ if }  \bar{a}_j<a_o.
\end{array}\right.
\end{equation}
Informally speaking, the algorithm modifies the covariance matrix in the pilot time $mM$ in such a way that it comes closer to one which has an optimal acceptance ratio. Then the standard Metropolis-Hastings algorithm begins with the latest state and proposal distribution of the pilot time. In Algorithm \ref{PilotA} the pilot adaptive Metropolis algorithm is recapitulated with an arbitrary starting state $x^{(0)}\in E$ and a starting guess for the positive definite covariance matrix $C_0$. 

\begin{algorithm}
\caption{A Pilot Adaptive Metropolis Algorithm.}
\label{PilotA}
\begin{algorithmic}
 \State $j=0;$
    \For{i = 1 to N}
    \If{$i\equiv 0 \bmod{m}$ and $i\leq mM$} 	
    	\State $C_{j}=\Xi_{PAM}(C_{j-1})$;
	\State j++;
    \EndIf
     \State Generate $y$ from $q_{C_j}(x^{(i-1)},\cdot)$ and $u$ from $\mathcal{U}(0,1)$;
     \If{$u \leq \alpha(x^{(i-1)},y)$}
     	\State $x^{(i)}=y;$
	\Else
	\State $x^{(i)}=x^{(i-1)};$
	\EndIf
     \EndFor\\
    \State  \Return{$\left\{ x^{(1)}, x^{(2)}, 	\ldots, x^{(N)}\right\}$}
  \end{algorithmic}
\end{algorithm}

The convergence from the pilot adaptive algorithm (Algorithm \ref{PilotA}) is described in \cite{strauss2015statistical}. It follows from Theorem \ref{convPilot}.
\begin{thm} \label{convPilot}
Let $\pi$ be the density of the target distribution supported on a bounded measurable subset $E \subset \mathbb{R}$. Further, assume that $\pi$ is bounded from  above.  Let $\epsilon > 0 $ and let $\mu_0$ be any initial distribution on $E$. Define AM-chain ${x_n}$ by the generalized transition probabilities \cite{haario2001adaptive}. Then the AM-chain simulates properly from the target distribution $\pi^*$ , that is, for any bounded and measurable function $f: E \rightarrow \mathbb{R}$, the equality
$lim_{n\rightarrow\infty}\frac{1}{n+1}(f(x^{(0)})+f(x^{(1)})+ ... +f(x^{(n)})) \approx \int_{E}f(x)\pi^*(dx,)$
holds almost surly.
\end{thm}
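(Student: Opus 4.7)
The plan is to exploit the central structural feature of Algorithm \ref{PilotA} that was not present in the Haario--Saksman--Tamminen setting: after the deterministic pilot time $mM$, no further adaptation takes place, so the tail of the chain $\{x^{(i)}\}_{i>mM}$ is a bona fide homogeneous Markov chain with the fixed Gaussian proposal density $q_{C_M}(\,\cdot\,,\,\cdot\,)$ and the Metropolis acceptance ratio $\alpha$. This reduces the statement essentially to the classical ergodic theorem for Metropolis--Hastings, plus a negligibility argument for the pilot segment.

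First I would split the ergodic average as
\[
\frac{1}{n+1}\sum_{i=0}^{n} f(x^{(i)}) = \frac{1}{n+1}\sum_{i=0}^{mM} f(x^{(i)}) + \frac{n-mM}{n+1}\cdot\frac{1}{n-mM}\sum_{i=mM+1}^{n} f(x^{(i)}).
\]
Since $f$ is bounded and $mM$ is a fixed constant, the first sum is $O(mM\|f\|_\infty)$ and vanishes after division by $n+1$. The prefactor $(n-mM)/(n+1)$ tends to $1$, so it remains to prove that
$\frac{1}{n-mM}\sum_{i=mM+1}^{n} f(x^{(i)}) \to \int_E f \, d\pi^*$ almost surely.

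Second, I would condition on the $\sigma$-algebra $\mathcal{F}_{mM}$ generated by the pilot history. Conditional on $\mathcal{F}_{mM}$, the covariance $C_M$ and the state $x^{(mM)}$ are fixed, and $\{x^{(i)}\}_{i\geq mM}$ is a time-homogeneous Metropolis--Hastings chain with symmetric Gaussian proposal $q_{C_M}$ (hence the acceptance ratio simplifies to $\min\{1, \pi(y)/\pi(x)\}$). Detailed balance against $\pi^*$ is immediate from the usual computation, so $\pi^*$ is an invariant probability. Because $E\subset\mathbb{R}$ is bounded and the Gaussian proposal has strictly positive density on all of $E$, and because $\pi$ is bounded above and supported on $E$, the chain is $\pi^*$-irreducible, aperiodic, and Harris recurrent (this is the standard Tierney/Roberts--Tweedie theorem for MH chains with everywhere-positive proposals on a bounded set). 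The Harris ergodic theorem then yields
\[
\lim_{n\to\infty}\frac{1}{n-mM}\sum_{i=mM+1}^{n} f(x^{(i)}) = \int_E f\, d\pi^* \qquad \text{a.s.},
\]
for every bounded measurable $f$, irrespective of the (random) starting state $x^{(mM)}$.

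Third, to pass from the conditional to the unconditional almost-sure statement I would integrate out $\mathcal{F}_{mM}$: since the conditional null set has probability zero for every realization of the pilot segment, and the pilot segment is itself supported on a measurable space, the overall null set remains null. Combining with the earlier split gives the claimed identity.

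The main obstacle is the verification that, for every realization of the pilot, the resulting Metropolis chain is Harris recurrent rather than merely positive recurrent, since the stated conclusion is almost-sure convergence of ergodic averages rather than convergence in distribution. Once boundedness of $E$ and everywhere-positivity of the Gaussian proposal are used, this is standard, but one has to check that positive definiteness of $C_M$ is preserved by the pilot updates $\Xi_{PAM}$ (which it is, since each update rescales by a positive factor) so that $q_{C_M}$ remains a nondegenerate Gaussian density on $\mathbb{R}^d$ almost surely. The rest of the argument is bookkeeping.
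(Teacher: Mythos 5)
Your argument is correct as a proof of ergodicity for Algorithm \ref{PilotA}, but it is not the route the paper takes, and strictly speaking it proves a slightly different statement than Theorem \ref{convPilot} as written. The paper's proof is a citation: the theorem is (essentially verbatim) the ergodicity theorem of Haario, Saksman and Tamminen \cite{haario2001adaptive} for the AM-chain whose proposal covariance is recomputed from the \emph{entire} past history at \emph{every} iteration, so that chain never becomes Markovian and one cannot condition on a finite $\sigma$-algebra $\mathcal{F}_{mM}$ after which the dynamics are homogeneous. The proof in \cite{haario2001adaptive} must therefore control the diminishing adaptation directly (a mixingale/coupling-type argument), and that is precisely where the hypotheses that $E$ is bounded and $\pi$ is bounded above enter in an essential way. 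Your reduction --- split off the finite pilot segment, observe that the tail is a homogeneous Metropolis--Hastings chain with a fixed nondegenerate Gaussian proposal, invoke detailed balance, $\pi$-irreducibility, Harris recurrence and the Harris law of large numbers, then integrate out the pilot history --- is sound, and it is the cleaner, more elementary argument for the algorithm the paper actually runs; the surrounding text (``the Algorithm converges after the pilot time'') is really an appeal to this standard Markov-chain fact rather than to the adaptive-MCMC machinery.

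What each approach buys: the citation to \cite{haario2001adaptive} covers the literal statement about the ever-adapting AM-chain but is overkill for Algorithm \ref{PilotA} and leaves implicit why the pilot phase is harmless; your proof nails the pilot algorithm exactly (including the check that $\Xi_{PAM}$ preserves positive definiteness of $C_j$, which the paper never verifies) but would not survive if adaptation continued indefinitely, since your central step collapses without a fixed post-adaptation kernel. If your write-up is meant to stand as a proof of Theorem \ref{convPilot} exactly as stated, you should either restate the theorem for the finitely-adapting chain or supply the Haario et al.\ argument for the non-Markovian case; as a proof of convergence of Algorithm \ref{PilotA} it is complete modulo the standard references for Harris recurrence of Metropolis chains with everywhere-positive proposals on a bounded state space.
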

\begin{proof} 
See \cite{haario2001adaptive, strauss2015statistical}.
\end{proof}
It follows that the Algorithm \ref{PilotA} converges after the the pilot time as in Theorem \ref{convPilot}. It is important to note that when applying Algorithm \ref{PilotA}  the chain only satisfies the Markov property after the last adaption at time $mM$. However, the chain usually still moves towards the high probability areas of the target distribution during the pilot time \cite{straussstatistical, strauss2015statistical, Strauss2016Statistical}. 

One problem left to answer is how to chose the burn in time $B$ and the number of total samples $N$ in order obtain proper samples from the posterior density. Even if, to our knowledge, there is no framework to choose $B$ and $N$ prior to run the algorithm, there are a few convergence diagnostics, such as Gelman and Rubin \cite{gelman1992inference, brooks1998general}, Geweke \cite{el2006comparison}, Raftery and Lewis \cite{raftery1996implementing} or empirical methods like trace plots or auto correlation plots. All this convergence diagnostics can be applied to test whether or not a chain converged. However, each may obtain misleading results in some cases, hence it is recommended to use multiple of the above diagnostics simultaneously.

\section{Simulation Study}\label{simulationStudy}

In this section, reconstructions using a pilot adaptive Metropolis algorithm (Algorithm \ref{PilotA}) are presented.   The photon density measurements were simulated on a mesh of 2097 triangles (Figure \ref{meshes}.A), then $1\%$ Gaussian measurement noise has been added to the measurements. The reconstructions where made, based on the noisy measurement on a mesh of 541 triangles (Figure \ref{meshes}.B). The mesh for the simulations and the reconstructions were chosen to be different in order to avoid committing an inverse crime. We assume that the parameters of interest are known and constant on the boundary, hence the number of parameters to be estimated for $D$ and $\mu$ where with 477 parameters somewhat smaller then the number of triangles. Note that in order to reduce the computational time the starting guess $x^{(0)}\in E$ has been selected to be the reconstruction after a few iterations of an analytical minimization algorithm. In all reconstructions we choose to perform $M=600$ adaptions, one every $m=50$ iterations, e.i. the pilot time was chosen to be $mM=30.000$. Further, the burn in time was chosen to be $B=100.000$ and the total number of samples was $N=150.000$. We compared this method with the Newton method which we run until convergence at approximately 150 iterations.

\begin{figure}
\centering
\includegraphics[scale=0.35]{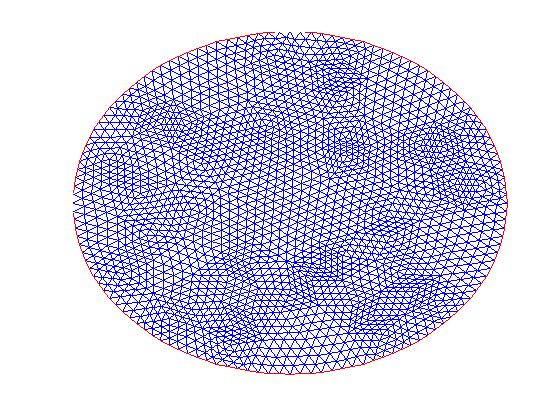}\includegraphics[scale=0.35]{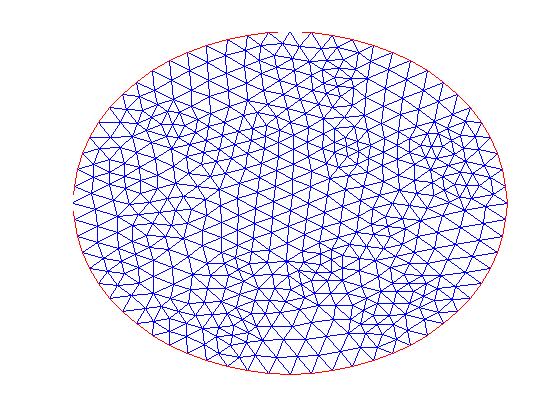}{\\\vspace*{-.5cm}\hspace*{.4cm}\tiny A\hspace*{6.9cm}\tiny B}
\caption{A: Mesh containing 2097 triangles used for simulating measurements. B: Mesh containing 541 triangles used for the parameter reconstruction.}\label{meshes}
\end{figure}

In figures \ref{incone}-\ref{inc_1c}, image A and B represent the true parameters $\mu$ and $D$ in mesh \ref{meshes}.A, images C-D and E-F are the corresponding reconstructions using the TV reconstruction (Section \ref{totalVar}) and a mixture of the TV and the $\ell_1$ regularization (Equation  \ref{genReg}) respectively from photon density measurements with $1\%$ additive relative Gaussian noise. It can be seen that both, the reconstructions with the TV, and the mixed TV and $\ell_1$, regularizations obtain relatively good reconstructions from the true parameters. However, in the presence of multiple inclusions (Figure \ref{incfour}) or more complex inclusions (Figure \ref{halfmoon} and \ref{inc_1c}) the mixed regularizations seems to outperform the TV regularization. Note that in figures G-H, which represent reconstructions with the IRGN method, it can be seen that it performs similarly to the statistical methods in simple cases with one inclusion, but worse in setting with more complex inclusions.

\begin{figure}
\centering
\includegraphics[scale=0.34]{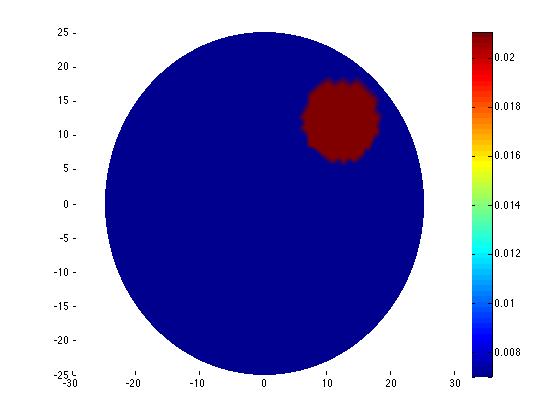}\includegraphics[scale=0.34]{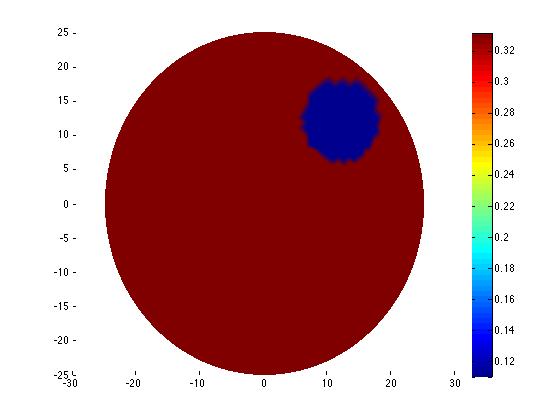}{\\\vspace*{-.5cm}\hspace*{-.4cm}\tiny A\hspace*{7.7cm}\tiny B}
\includegraphics[scale=0.34]{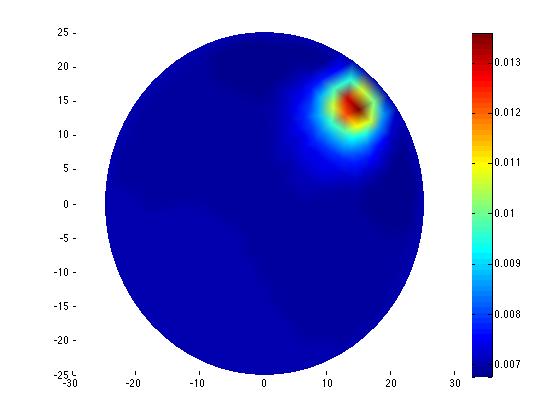}\includegraphics[scale=0.34]{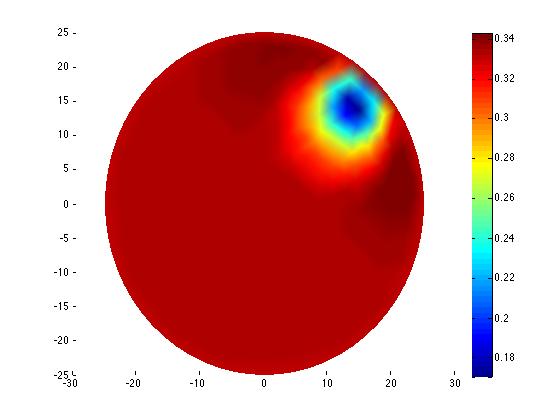}{\\\vspace*{-.5cm}\hspace*{-.4cm}\tiny C\hspace*{7.7cm}\tiny D}
\includegraphics[scale=0.34]{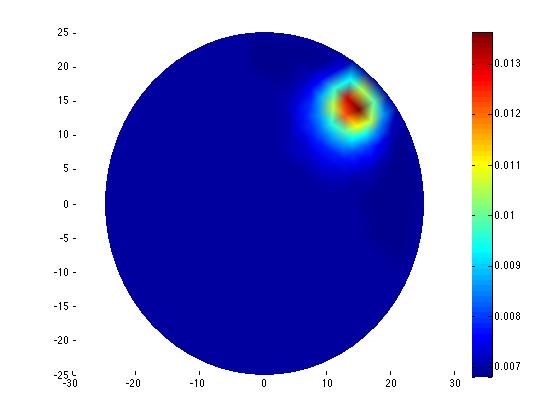}\includegraphics[scale=0.34]{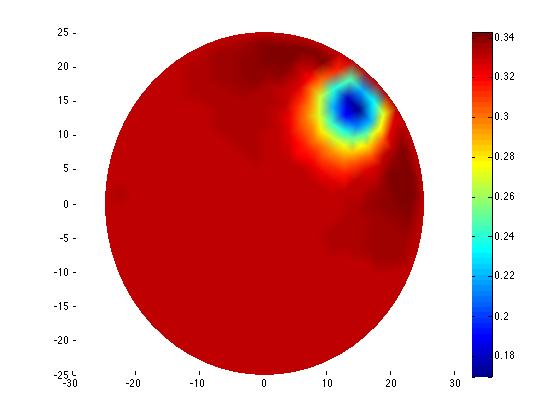}{\\\vspace*{-.5cm}\hspace*{-.4cm}\tiny E\hspace*{7.7cm}\tiny F}
\includegraphics[scale=0.34]{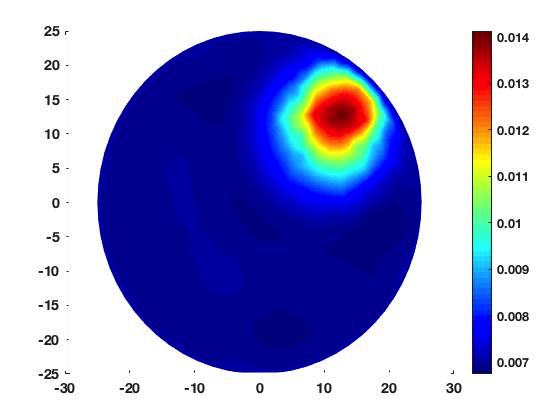}\includegraphics[scale=0.34]{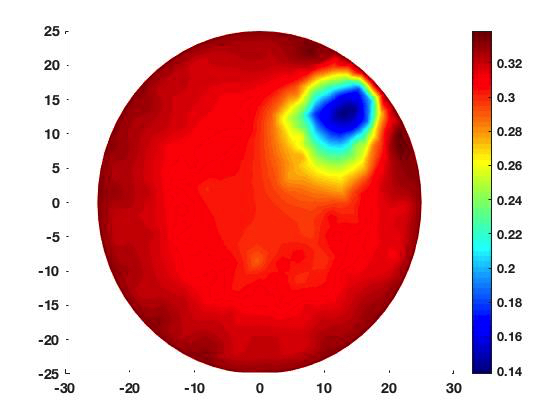}{\\\vspace*{-.5cm}\hspace*{-.4cm}\tiny G\hspace*{7.7cm}\tiny H}
\caption{A \& B: The true parameters $\mu$ and $D$ in the simulation mesh (Figure \ref{meshes}.A).  Reconstructions of the parameters $\mu$ and $D$ from measurements with $1\%$ additive Gaussian noise, C \& D: using the TV regularization (Section \ref{totalVar}).  E \& F: using a mixture of the TV and $\ell_1$ regularization (Equation  \ref{genReg}). G \& H: using IRGN Method.}\label{incone}
\end{figure}

\begin{figure}
\centering
\includegraphics[scale=0.34]{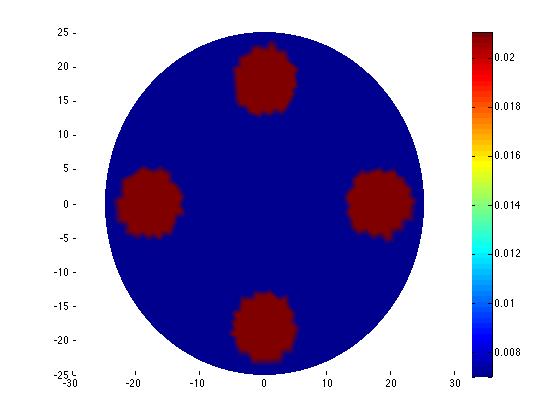}\includegraphics[scale=0.34]{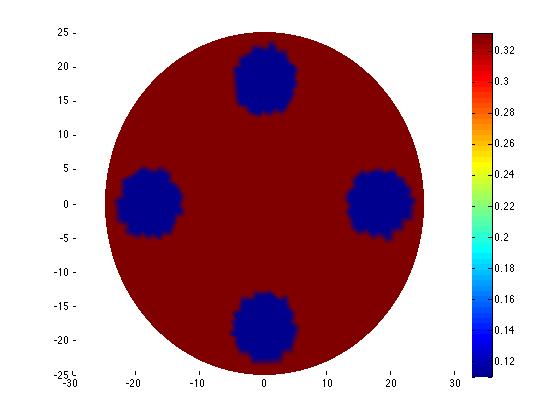}{\\\vspace*{-.5cm}\hspace*{-.4cm}\tiny A\hspace*{7.7cm}\tiny B}
\includegraphics[scale=0.34]{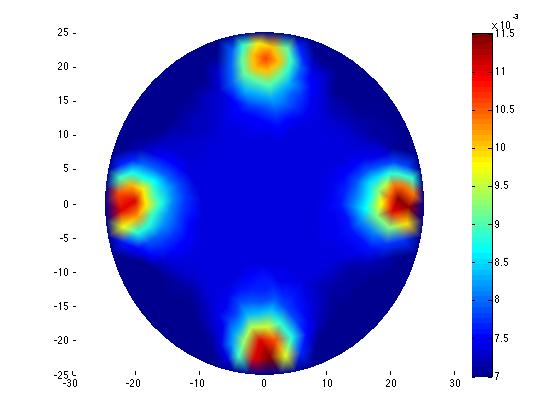}\includegraphics[scale=0.34]{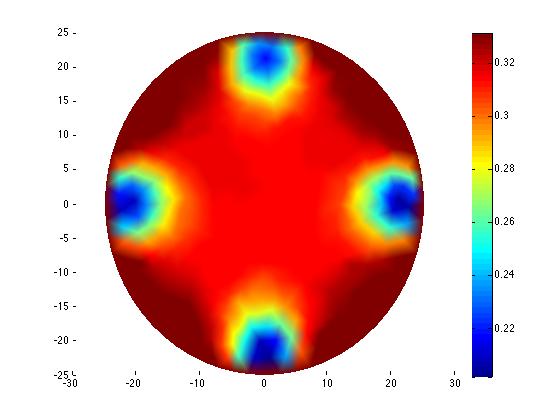}{\\\vspace*{-.5cm}\hspace*{-.4cm}\tiny C\hspace*{7.7cm}\tiny D}
\includegraphics[scale=0.34]{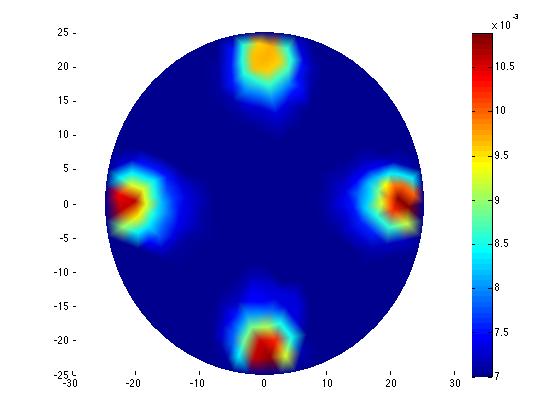}\includegraphics[scale=0.34]{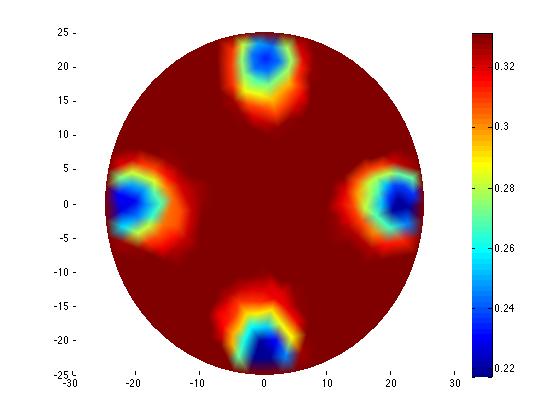}{\\\vspace*{-.5cm}\hspace*{-.4cm}\tiny E\hspace*{7.7cm}\tiny F}
\includegraphics[scale=0.34]{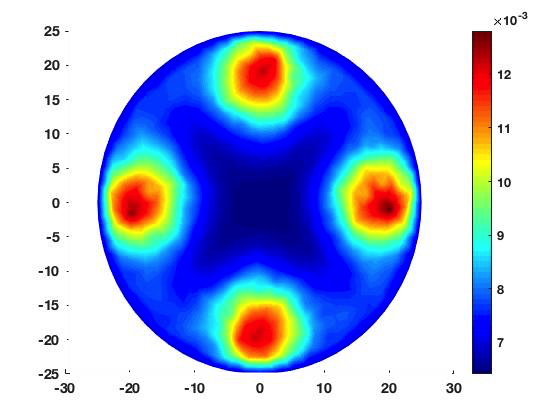}\includegraphics[scale=0.34]{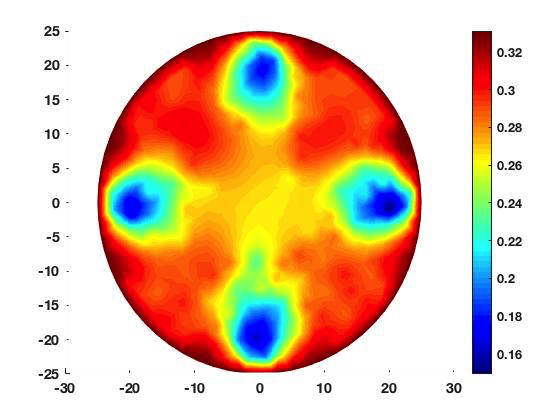}{\\\vspace*{-.5cm}\hspace*{-.4cm}\tiny G\hspace*{7.7cm}\tiny H}
\caption{A \& B: The true parameters $\mu$ and $D$ in the simulation mesh (Figure \ref{meshes}.A).  Reconstructions of the parameters $\mu$ and $D$ from measurements with $1\%$ additive Gaussian noise, C \& D: using the TV regularization (Section \ref{totalVar}).  E \& F: using a mixture of the TV and $\ell_1$ regularization (Equation  \ref{genReg}). G \& H: using IRGN Method.}\label{incfour}
\end{figure}
%%%%%%%%%%%%%%%%%%%%%%%%%%%%%%%%
\begin{figure}
\centering
\includegraphics[scale=0.34]{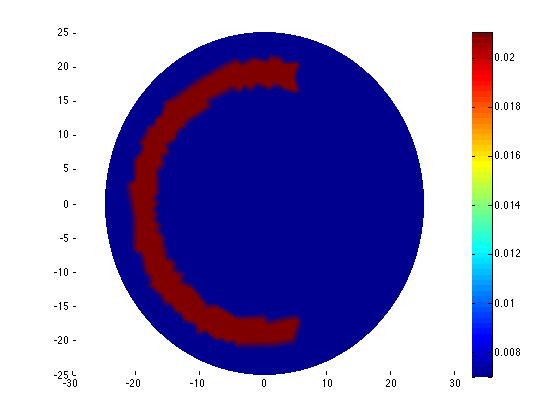}\includegraphics[scale=0.34]{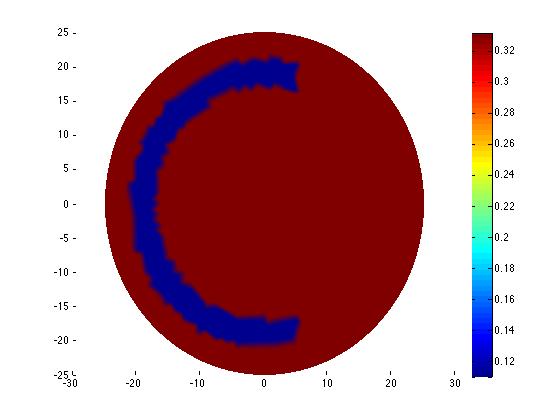}{\\\vspace*{-.5cm}\hspace*{-.4cm}\tiny A\hspace*{7.7cm}\tiny B}
\includegraphics[scale=0.34]{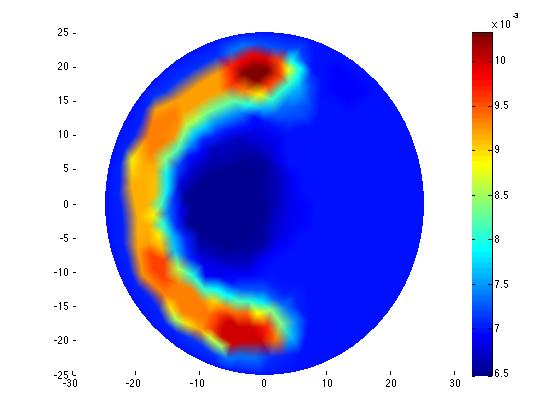}\includegraphics[scale=0.34]{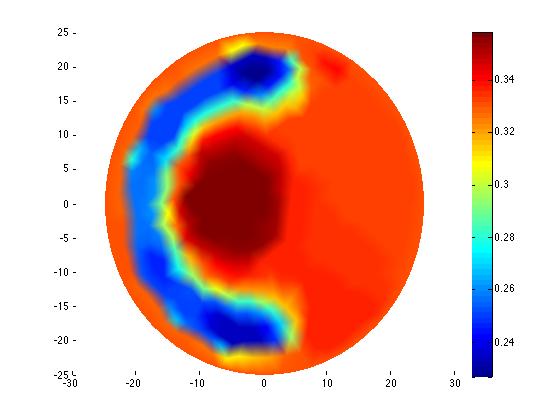}{\\\vspace*{-.5cm}\hspace*{-.4cm}\tiny C\hspace*{7.7cm}\tiny D}
\includegraphics[scale=0.34]{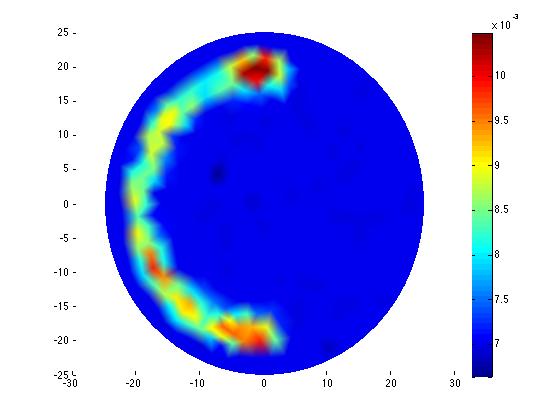}\includegraphics[scale=0.34]{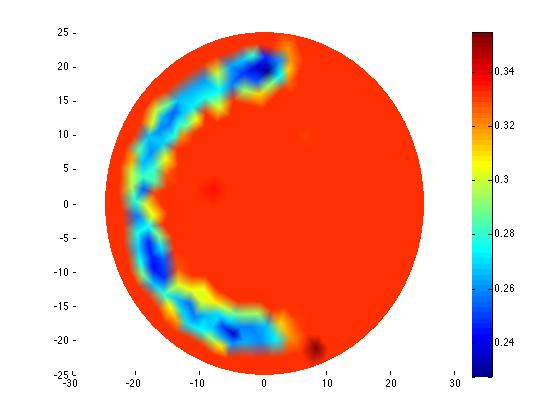}{\\\vspace*{-.5cm}\hspace*{-.4cm}\tiny E\hspace*{7.7cm}\tiny F}
\includegraphics[scale=0.34]{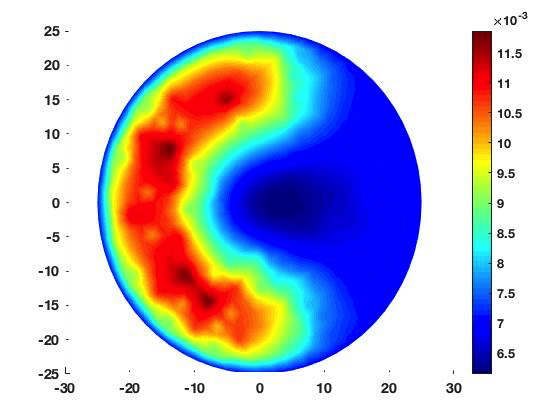}\includegraphics[scale=0.34]{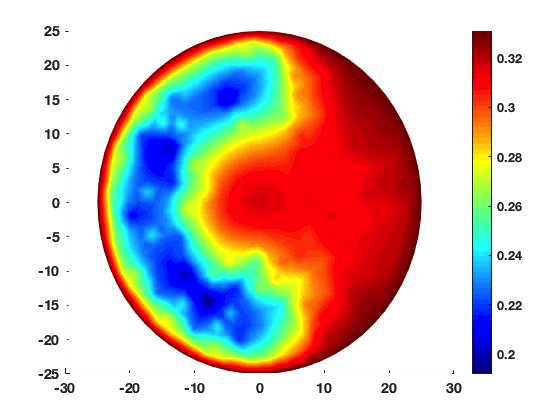}{\\\vspace*{-.5cm}\hspace*{-.4cm}\tiny G\hspace*{7.7cm}\tiny H}
\caption{A \& B: The true parameters $\mu$ and $D$ in the simulation mesh (Figure \ref{meshes}.A).  Reconstructions of the parameters $\mu$ and $D$ from measurements with $1\%$ additive Gaussian noise, C \& D: using the TV regularization (Section \ref{totalVar}).  E \& F: using a mixture of the TV and $\ell_1$ regularization (Equation  \ref{genReg}). G \& H: using IRGN Method.}\label{halfmoon}
\end{figure}
%%%%%%%%%%%%%%%%%%%%%%%%%%%
\begin{figure}
\centering
\includegraphics[scale=0.33]{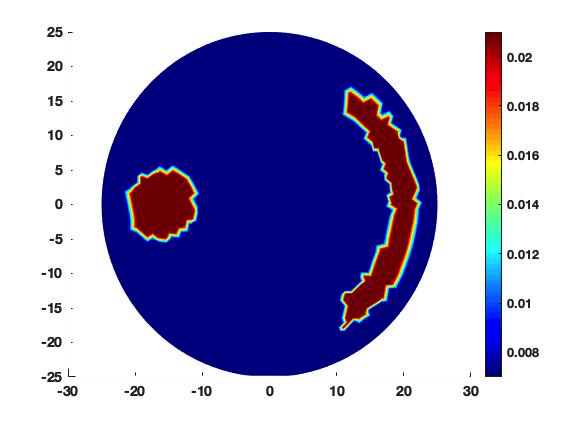}\includegraphics[scale=0.33]{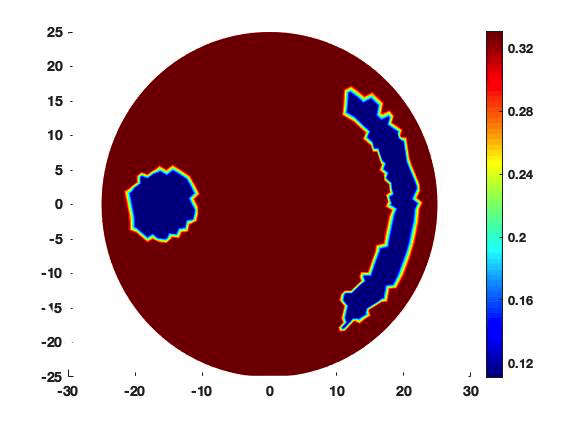}{\\\vspace*{-.5cm}\hspace*{-.4cm}\tiny A\hspace*{7.7cm}\tiny B}
\includegraphics[scale=0.34]{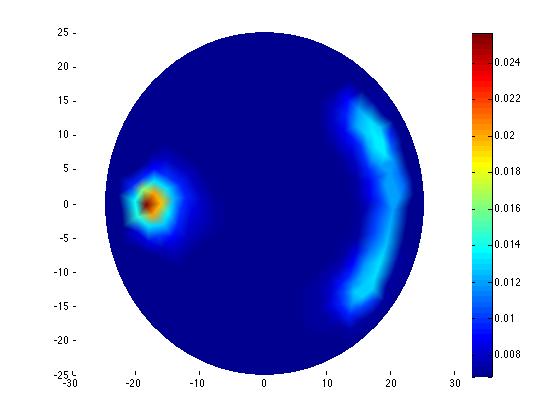}\includegraphics[scale=0.34]{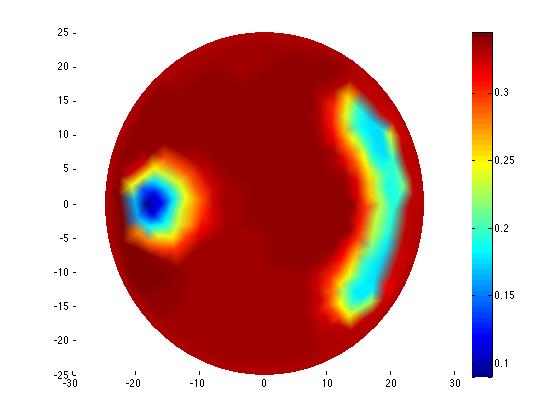}{\\\vspace*{-.5cm}\hspace*{-.4cm}\tiny C\hspace*{7.7cm}\tiny D}
\includegraphics[scale=0.34]{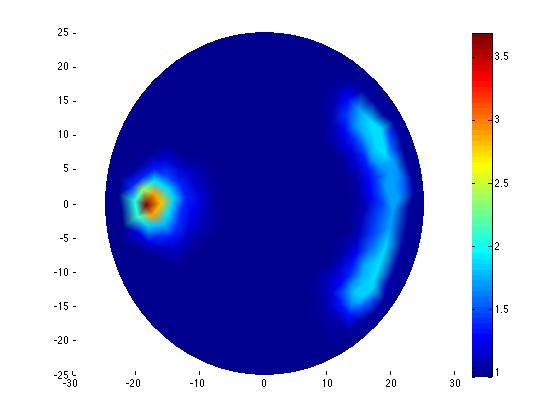}\includegraphics[scale=0.34]{geom1c_1n_D_stat.jpg}{\\\vspace*{-.5cm}\hspace*{-.4cm}\tiny E\hspace*{7.7cm}\tiny F}
\includegraphics[scale=0.33]{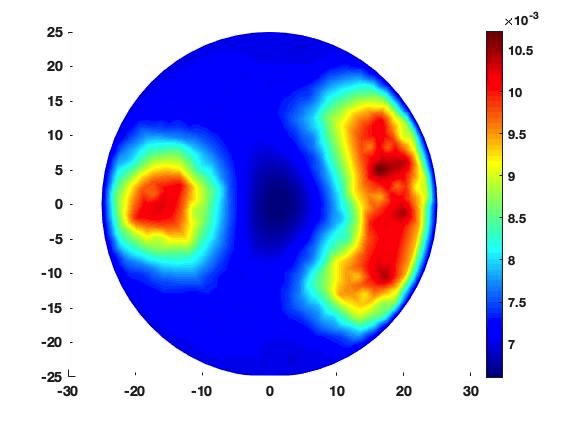}\includegraphics[scale=0.33]{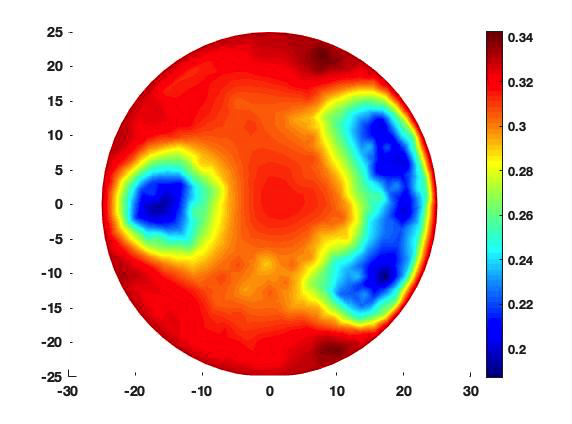}{\\\vspace*{-.5cm}\hspace*{-.4cm}\tiny G\hspace*{7.7cm}\tiny H}
\caption{A \& B: The true parameters $\mu$ and $D$ in the simulation mesh (Figure \ref{meshes}.A).  Reconstructions of the parameters $\mu$ and $D$ from measurements with $1\%$ additive Gaussian noise, C \& D: using the TV regularization (Section \ref{totalVar}).  E \& F: using a mixture of the TV and $\ell_1$ regularization (Equation  \ref{genReg}). G \& H: using IRGN Method.}\label{inc_1c}
\end{figure}
%%%%%%%%%%%%%%%%%%%%%%%%%%%%

Clearly, the reconstructions are strongly dependent on the choice of the regularization parameters. There is a vast literature for choosing optimal regularization parameters for linear problems. However, there are, to our knowledge, no good methods for nonlinear problems like DOT. Hence, we chose the parameters add hock. That is we used a computer cluster to run the algorithm with large set of regularization parameters choices, then we evaluated the reconstructions and picked the visually  best parameters for the TV and the mixed regularizations. Note that once this parameter was found it was kept fixed for all reconstructions in figures \ref{incone}-\ref{inc_1c}. 

\begin{table}[!htb]
    \centering
    \begin{tabular}{|c|c|c|c|c|}
    \hline
Example  & Example 1 & Example 2 & Example 3 & Example 4  \\ \hline
$\xi$  & 0.0018 & 0.0016 &0.0016 & 0.0019\\ \hline
Residual, $E_N$ & 0.0124 & 0.010115 & 0.01335 & 0.01302 \\ \hline
Residual, $E_S$ & 0.0129 & 0.0153 & 0.0139 & 0.0139\\ \hline
    \end{tabular}
    \caption{Numerical results for noiselevel $\xi$ and residual error (i) $E_N$ using IRGN and (ii) $E_S$ using Statistical inversion for 1$\%$ noise}    
    \label{tab:noise_res_1}
\end{table}
%%%%%%%%%%%%%%%%%%%%%%%%%%
% \begin{table}[!htb]
%     \centering
%     \begin{tabular}{|c|c|c|c|c|}
%      \hline
% Example  & Example 1 & Example 2 & Example 3 & Example 4  \\
% \hline
% $\xi$  & 0.0064 & 0.0071 &0.0057 & 0.0064 \\
% \hline
% Residual, $E_N$ & 0.011 & 0.01423 & 0.01516 & 0.01355 \\ \hline
% Residual, $E_S$ & 0.0134 & 0.0154 & 0.0142 & 0.0149\\ \hline
%     \end{tabular}
%     \caption{Numerical results for noiselevel $\xi$ and residual error (i) $E_N$ using IRGN and (ii) $E_S$ using Statistical inversion for 5$\%$ noise}
%     \label{tab:noise_res_5}
% \end{table}
%%%%%%%%%%%%%%%%%%%%%%%%%
 \begin{figure}[!htb]
     \centering
     \includegraphics[scale=0.27]{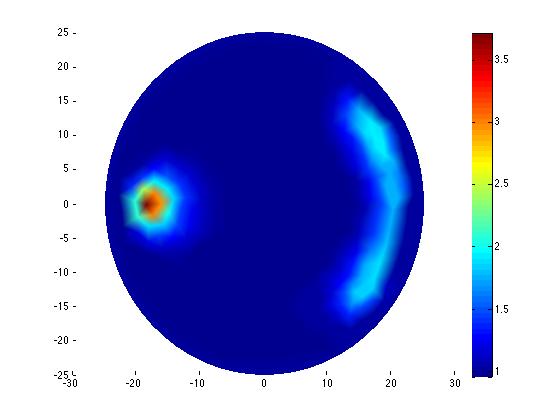}\includegraphics[scale=0.27]{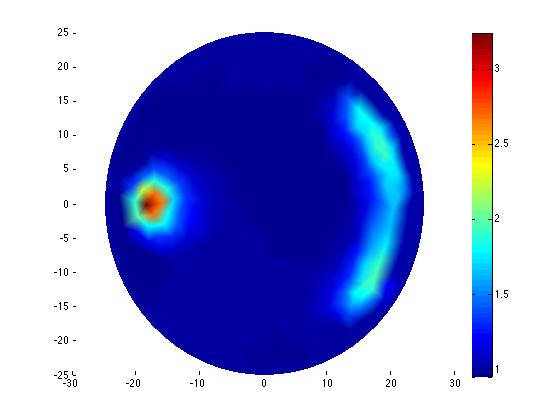}\includegraphics[scale=0.27]{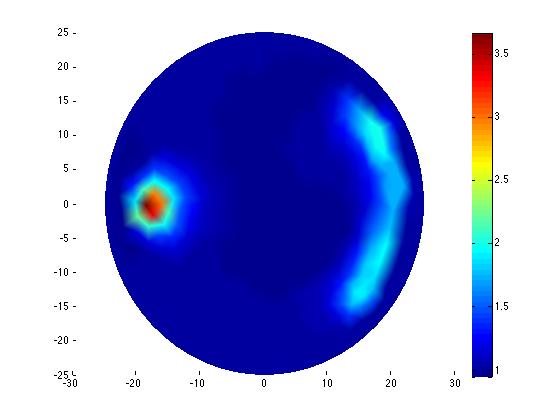}{\\\vspace*{-.5cm}\hspace*{-.4cm}\tiny A\hspace*{5.7cm}\tiny B\hspace*{4.7cm}\tiny C}
     
     \includegraphics[scale=0.27]{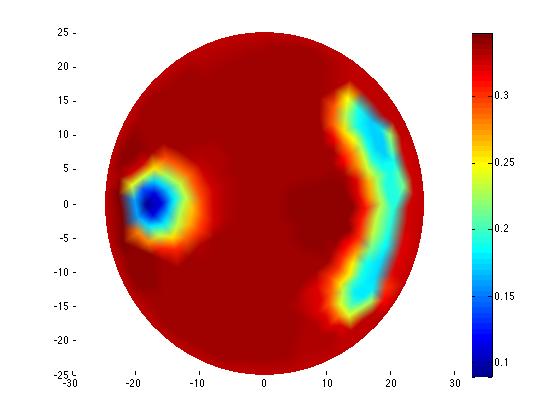}\includegraphics[scale=0.27]{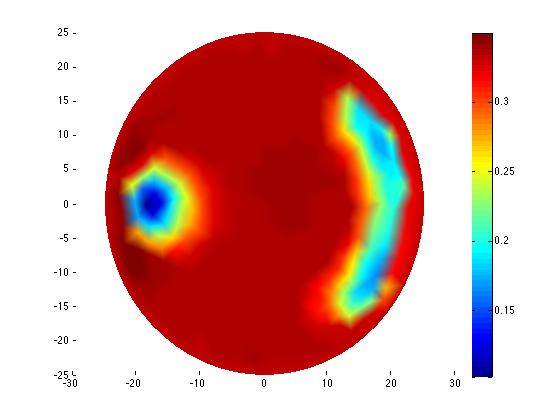}\includegraphics[scale=0.27]{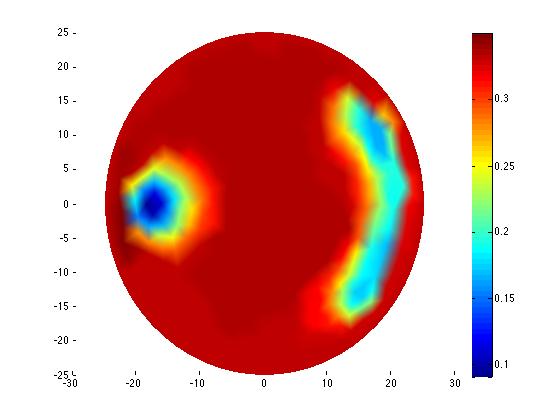}{\\\vspace*{-.5cm}\hspace*{-.4cm}\tiny D\hspace*{5.7cm}\tiny E\hspace*{4.7cm}\tiny F}
          \caption{Reconstruction of $\mu$ and $D$ using statistical inversion method with, A, D: 5$\%$ noise, B, E: with 10$\%$ noise, C, F: with 20$\%$ relative additive Gaussian noise, respectively.}
     \label{fig:absorption_noise}
 \end{figure}
 %%%%%%%%%%%%%%%%%%%%%%%%%
 \begin{figure}[!htb]
     \centering
     \includegraphics[scale=0.27]{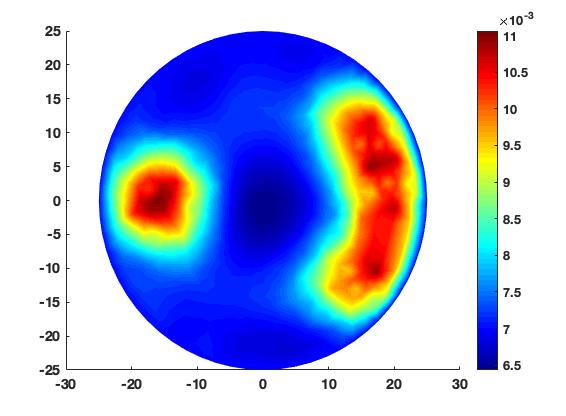} \includegraphics[scale=0.27]{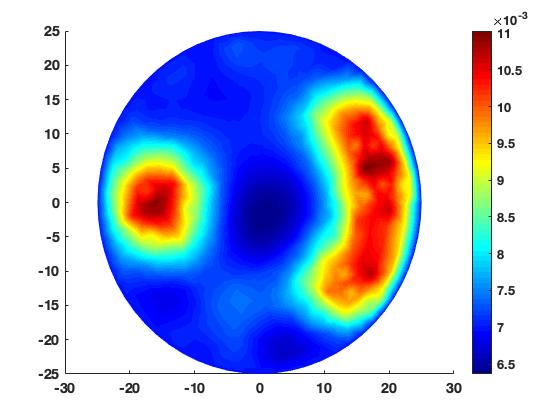}\includegraphics[scale=0.27]{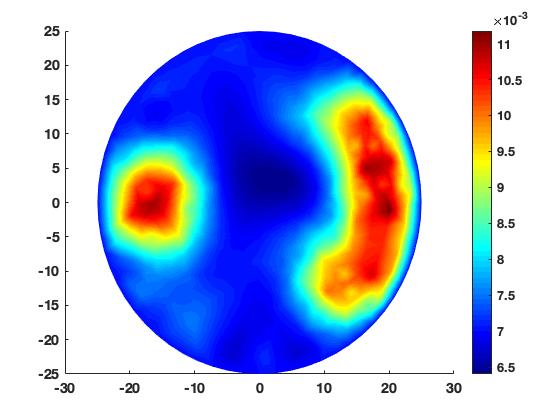}{\\\vspace*{-.5cm}\hspace*{-.4cm}\tiny A\hspace*{5.7cm}\tiny B\hspace*{4.7cm}\tiny C}
     
     \includegraphics[scale=0.27]{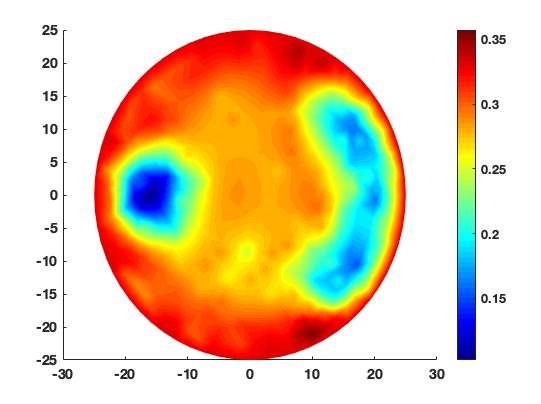}\includegraphics[scale=0.27]{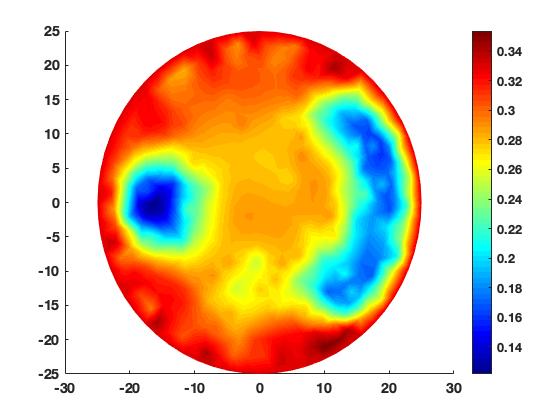}\includegraphics[scale=0.27]{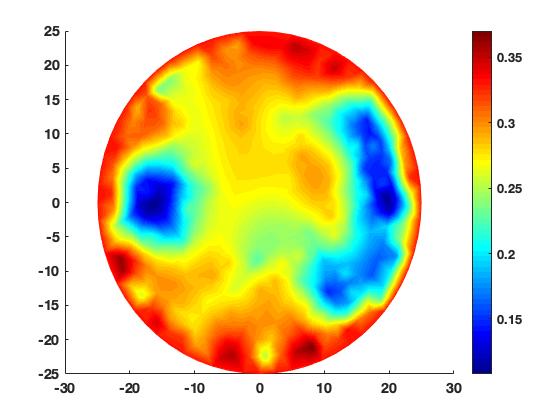}{\\\vspace*{-.5cm}\hspace*{-.4cm}\tiny D\hspace*{5.7cm}\tiny E\hspace*{4.7cm}\tiny F}
          \caption{Reconstruction of $\mu$ and $D$ using IRGN method with, A, D: 5$\%$ noise, B, E: with 10$\%$ noise, C, F: with 20$\%$ relative additive Gaussian noise, respectively.}
     \label{fig:irgn_noise}
 \end{figure}
%   \begin{figure}[!htb]
%      \centering
%      \includegraphics[scale=0.27]{geom1c_5n_D_mixTV.jpg}\includegraphics[scale=0.27]{geom1c_10n_D_mixTV.jpg}\includegraphics[scale=0.27]{geom1c_20n_D_mixTV.jpg}{\\\vspace*{-.5cm}\hspace*{-.4cm}\tiny A\hspace*{5.7cm}\tiny B\hspace*{4.7cm}\tiny C}

%           \caption{Reconstruction of $D$ with A: 5$\%$ noise, B: with 10$\%$ noise, C: with 20$\%$ additive Gaussian noise}
%      \label{fig:diffusion_noise}
%  \end{figure}
 %%%%%%%%%%%%%%%%%%%%
\begin{table}[!htb]
    \centering
    \begin{tabular}{ |p{1.9cm}||p{1.6cm}|p{1.6cm}|p{1.6cm}|p{1.6cm}|p{1.6cm}|p{1.6cm}|}
 \hline 
   Relative  & $L_1$ error & $L_1$ error & $L_1$ error & $L_2$ error  & $L_2$ error & $L_2$ error \\
   noise level & (TV) & (GR) &  (IRGN) &  (TV) &  (GR) & (IRGN) \\
%   Relative noise level & $L_1$ error (TV)& $L_1$ error (GR) & $L_1$ error (IRGN) & $L_2$ error (TV) & $L_2$ error (GR) & $L_2$ error (IRGN) 
 \hline 
1\%   & 0.1447    & 0.1337 & 0.1234 &   0.3245 &0.3398 & 0.3230\\
 5\% &   0.1439 & 0.1347  & 0.1206 & 0.3192&0.3423 & 0.3212\\
 10\%   & 0.1462    &0.1355 & 0.1258  & 0.3272&0.3390 & 0.3257\\
 15\% &   0.1442  & 0.1328 & 0.1315 &  0.3232&0.3368 & 0.3287\\
 20\%   & 0.1404    &0.1358 & 0.1144 &  0.3129&0.3441 & 0.3268\\
%  25\% &   0.1424  & 0.1473 & div &  0.3293&0.3734 & div \\ 
% 30\% &   0.1554  & 0.1730 & div &  0.3423&0.4232 & div \\
 \hline
    \end{tabular}
    \caption{Relative Numerical Errors of $\mu$}
    \label{tab:error_mu}
\end{table}

\begin{table}[!htb]
    \centering
    \begin{tabular}{ |p{1.9cm}||p{1.6cm}|p{1.6cm}|p{1.6cm}|p{1.6cm}|p{1.6cm}|p{1.6cm}|}
 \hline
   Relative  & $L_1$ error & $L_1$ error & $L_1$ error & $L_2$ error  & $L_2$ error & $L_2$ error \\
   noise level & (TV) & (GR) &  (IRGN) &  (TV) &  (GR) & (IRGN) \\
 \hline
1\%   & 0.0625    & 0.0470 & 0.1002 & 0.1189 &0.1227 & 0.1259\\
 5\% &  0.0625 & 0.0455  & 0.0930 & 0.1161 & 0.1198 & 0.1192\\
 10\%   & 0.0638 &  0.0462 & 0.1159 & 0.1199 & 0.1216 & 0.1423 \\
 15\% & 0.0632 & 0.0449 & 0.1363 & 0.1204 & 0.1203 & 0.1653\\
  20\%   & 0.0598  & 0.0474 & 0.0923 & 0.1158 & 0.1242 & 0.1171\\
%  25\% &   0.0608  & 0.0557 & div & 0.1279 & 0.1461 & div \\ 
% 30\% &  0.0753 & 0.0751 & div & 0.1442 & 0.2055 & div \\
 \hline
    \end{tabular}
    \caption{Relative Numerical Errors of $D$}
    \label{tab:error_D}
\end{table}

In Table \ref{tab:error_mu} and \ref{tab:error_D} the relative error of reconstructions of $\mu$ and $D$ with different relative noise levels where computed. Note that the relative error of $\mu$ is defined as $\frac{||\mu_{t}-\mu_{r}||_{L_p}}{||\mu_{t}||_{L_p}}$, were $\mu_{t}$ represents the true parameter to be estimated and $\mu_{r}$ the reconstruction of $\mu_{t}$. The relative error of $D$ is defined analogous. In table \ref{tab:error_mu} and \ref{tab:error_D} the relative $L_1$ and $L_2$ errors from the reconstructions using the Total Variation regularization (TV) defined in section \ref{totalVar}, the General Regularization (GR) defined in (\ref{genReg}) as well as with the IRGN method have been computed. As expected the reconstructions with using the Total Variation have mostly a smaller $L_2$ relative errors while reconstructions using the General Regularization have mostly a smaller $L_1$ relative errors. Furthermore, it can be seen that the presented method obtained relative good reconstructions up to a relative noise level from about $20\%$ (see figure \ref{fig:absorption_noise}). 
Note that the IRGN method generally seems to under-perform when comparing its relative error  with the statistical methods. Moreover, the statistical inversion method and the IRGN method become unstable for higher noise level.

%%%%%%% END NEW ERROR TABLE %%%%%%%%%%%%%%%%%%%%%%%%%%%%%%%%%%%%%%%%%

\section{Conclusion}\label{conclusion}

We presented a statistical formulation for the DOT inverse problem and compared it with the classical IRGN method. We found that from the visual point of view the statistical method outperforms the IRGN method up to high levels of relative noise. However, due to using the MCMC method the statistical solution is  computational  more intensive than the IRGN method. 

The main contributions of this paper can be summarized in:
\begin{itemize}
\item To the best of our knowledge this is the first paper solving the Statistical Inverse problem for DOT.
\item We introduce a mixture of the $TV$ and the $\ell_p$ regularization for the statistical DOT paper.
\item We present many numerical experiments to evaluate the performance of the statistical inverse problem and compare it with the IRGN method as baseline.
\end{itemize}

% \section{Acknowledgement} We thank the University of Brement and Humboldt Foundation to provide support, Clemson for exchange program which resulted in the connection with Bremen and particulary we had the pleausre of meeting Armin Lechlieter, etc. Thilo was also interacted as a graduate student./

\newpage
\bibliographystyle{plain}
\bibliography{references}

\end{document}